\theoremstyle{plain} 
\newtheorem{theorem}{Theorem}[section]
\newtheorem{lemma}[theorem]{Lemma}
\newtheorem{corollary}[theorem]{Corollary}
\newtheorem{proposition}[theorem]{Proposition}
\newtheorem{fact}[theorem]{Fact}
\newtheorem{theor}{Theorem}
\theoremstyle{definition}
\newtheorem{definition}[theorem]{Definition}
\newtheorem{remark}[theorem]{Remark}
\newtheorem{example}[theorem]{Example}
\renewcommand{\Re}{\operatorname{Re}}
\renewcommand{\Im}{\operatorname{Im}}
\newcommand{\phoro}[1]{\mathtt{#1}}
\newcommand{\psin}{\operatorname{\phoro{sin}}}
\newcommand{\pcos}{\operatorname{\phoro{cos}}}
\newcommand{\ptan}{\operatorname{\phoro{tan}}}
\begin{document}
\title[Isometric and anti-isometric classes of timelike minimal surfaces]{Isometric and anti-isometric classes of timelike minimal surfaces in Lorentz--Minkowski space}
  \author[S. Akamine]{Shintaro Akamine}
\address[Shintaro Akamine]{
Department of Liberal Arts, College of Bioresource Sciences,
Nihon University, 
1866 Kameino, Fujisawa, Kanagawa, 252-0880, Japan}
\email{akamine.shintaro@nihon-u.ac.jp}

\keywords{Lorentz-Minkowski space, timelike minimal surface, symmetry, isometric class, anti-isometric class.}
\subjclass[2010]{Primary 53A10; Secondary 53B30, 57R45.}

\thanks{
This work was partially supported by JSPS KAKENHI Grant Numbers 19K14527 and 23K12979. 
}

\begin{abstract}
Isometric class of minimal surfaces in the Euclidean $3$-space $\mathbb{R}^3$ has the rigidity: if two simply connected minimal surfaces are isometric, then one of them is congruent to a surface in the specific one-parameter family, called the associated family, of the other. 
On the other hand, the situation for surfaces with Lorentzian metrics is different. In this paper, we show that there exist two timelike minimal surfaces in the Lorentz-Minkowski $3$-space $\mathbb{R}^3_1$ that are isometric each other but one of which does not belong to the congruent class of the associated family of the other. We also prove a rigidity theorem for isometric and anti-isometric classes of timelike minimal surfaces under the assumption that  surfaces have no flat points. 

Moreover, we show how symmetries of such surfaces propagate for various deformations including isometric and anti-isometric deformations. In particular, some conservation laws of symmetry for Goursat transformations are discussed.
\end{abstract}

\maketitle

\section{Introduction}
Surfaces which admit a one-parameter family of isometric deformations preserving the mean curvature are called {\it Bonnet surfaces}. Due to Bonnet \cite{Bonnet}, it is known that any constant mean curvature surfaces in the Euclidean space $\mathbb{R}^3$ which is not totally umbilic is a Bonnet surface. 
For the case of minimal surfaces in $\mathbb{R}^3$, each minimal surface has a one-parameter family of isometric minimal surfaces, called the {\it associated family}.
Furthermore, the following rigidity theorem was shown by Schwarz \cite[p.175]{Schwarz}:

\begin{fact}\label{thm:Schwarz}
If two simply connected minimal surfaces in $\mathbb{R}^3$ are isometric, then one of them is congruent to a surface in the associated family of the other.
\end{fact}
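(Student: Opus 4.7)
The plan is to exploit the holomorphic Hopf differential of a minimal surface together with the fundamental theorem of surface theory. Let $X_1, X_2 : D \to \mathbb{R}^3$ be the two simply connected minimal surfaces and let $\Phi$ denote the given isometry. After composing $\Phi$, if necessary, with a reflection of $\mathbb{R}^3$ so that it becomes orientation-preserving, I would choose isothermal coordinates $z=u+iv$ on $D$ in which $\Phi$ acts as the identity; the two induced metrics then coincide, $ds_1^2 = \lambda^2\,|dz|^2 = ds_2^2$.

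Next I would study the Hopf differentials $Q_i = \phi_i\,dz^2$, where $\phi_i = \langle (X_i)_{zz}, N_i\rangle$ and $N_i$ is a unit normal. Each $Q_i$ is holomorphic, since for $H_i\equiv 0$ this is precisely the content of the Codazzi equation. Moreover, the Gauss equation together with minimality yields the pointwise identity $|\phi_i|^2 = -\tfrac14\,K_i\,\lambda^4$. By Theorema Egregium the two Gaussian curvatures agree, $K_1=K_2$, and hence $|\phi_1|=|\phi_2|$ throughout $D$.

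I would then argue that $\phi_2 = e^{i\theta}\phi_1$ for some constant $\theta\in\mathbb{R}$. If both Hopf differentials vanish identically, both surfaces are planes and the assertion is immediate. Otherwise the zeros of $\phi_1$ are isolated, and the pointwise equality of moduli forces $\phi_2$ to vanish at the same points and to the same order; the quotient $\phi_2/\phi_1$ therefore extends to a holomorphic function on the simply connected domain $D$ whose modulus is identically one, which the maximum modulus principle (or the open mapping theorem) pins down to a unimodular constant $e^{i\theta}$.

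Finally I would invoke the fundamental theorem of surface theory. The member $X_1^\theta$ of the associated family of $X_1$ is by construction a minimal immersion with metric $\lambda^2|dz|^2$ and Hopf differential $e^{i\theta}Q_1$, so the pair of fundamental forms of $X_1^\theta$ coincides with that of $X_2$; since simply connected surfaces in $\mathbb{R}^3$ are determined up to congruence by this pair, $X_2$ is congruent to $X_1^\theta$. The step I expect to demand the most care is the analysis at the flat points, i.e., the common zeros of $\phi_1$ and $\phi_2$: the matching of vanishing orders must be read off from $|\phi_1|=|\phi_2|$ before one can assert that $\phi_2/\phi_1$ is holomorphic rather than merely meromorphic.
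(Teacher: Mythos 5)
Your argument is correct, but note that the paper does not actually prove Fact \ref{thm:Schwarz}: it quotes it from Schwarz and remarks (at the start of Section 3) that the classical proof rests on flat points of non-planar minimal surfaces being isolated together with real analyticity, as in Spivak. Your Hopf-differential route makes that mechanism explicit: the flat points are the zeros of the holomorphic quadratic differential $\phi_1\,dz^2$, they are isolated, and $|\phi_1|=|\phi_2|$ forces $\phi_2/\phi_1$ to extend to a unimodular holomorphic function, hence a constant $e^{i\theta}$, after which Bonnet's rigidity theorem finishes. This is genuinely different from the strategy the paper uses for its Lorentzian analogue (Theorem \ref{thm:timelikeSchwarz}): there one first derives $\mathrm{III}=K\,\mathrm{I}$ from $H=0$, concludes that the two Gauss maps have equal first (and, via Weingarten, second) fundamental forms and hence coincide up to an ambient isometry, and only then runs the ``unimodular holomorphic quotient is constant'' argument on $\eta_2/\eta_1$ rather than on the Hopf differentials. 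The reason for that detour is visible in your own closing caveat: in the paracomplex setting $|\mathtt{f}|^2=0$ does not force $\mathtt{f}=0$ (cf.\ Lemma \ref{lemma:constant}) and zeros of paraholomorphic functions need not be isolated, so the division step at flat points breaks down; this is exactly why Theorem \ref{thm:timelikeSchwarz} carries the no-flat-points hypothesis and why Proposition \ref{prop:counter} exists. Your approach buys a self-contained Euclidean proof that explains why flat points are harmless in $\mathbb{R}^3$; the paper's Gauss-map approach avoids dividing by the Hopf differential and therefore transfers to the timelike setting. One small repair: postcomposing with a reflection of $\mathbb{R}^3$ does not change whether the intrinsic isometry is orientation preserving (it only flips the normal); to reduce to the orientation-preserving case you should instead reparametrize one surface anticonformally, $z\mapsto\bar z$, which leaves its image, and hence the set of images of its associated family, unchanged.
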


However, the situation in the case of surfaces with Lorentzian metrics is different as follows.
\begin{itemize}
\item Not only isometric deformations but also anti-isometric deformations, which reverse the first fundamental form of the original surface can be considered.
\item The shape operator is not necessarily diagonalizable and hence a specific point,  
the so-called a {\it quasi-umbilic point}, on which the shape operator is non-diagonalizable can appear on such a surface (see Section 2.2). Moreover umbilic and quasi-umbilic points are not isolated in general even for the case of minimal surfaces. 
\end{itemize}

In this paper, we consider {\it timelike minimal surfaces} in the Lorentz-Minkowski 3-space $\mathbb{R}^3_1$ with signature $(-,+,+)$, which are surfaces whose induced metric from $\mathbb{R}^3_1$ is Lorentzian and whose mean curvature vanishes identically. The following is the first main theorem.

\begin{theor}\label{Theorem1_Intro}
The following statement holds.
\begin{itemize}
\item[(1)] There exist two flat timelike minimal surfaces in $\mathbb{R}^3_1$ which are isometric each other but one of which does not belong to the congruent class of the associated family of the other.
\end{itemize}

Let $f_1$ and $f_2$ be simply connected timelike minimal surfaces in $\mathbb{R}^3_1$ without flat points. Furthermore, the following statements hold.
\begin{itemize}
\item[(2)] If $f_1$ and $f_2$ are isometric, then $f_1$ is congruent to a surface in the associated family $\{(f_2)_\theta\}_{\theta \in \mathbb{R}}$ of $f_2$.
\item[(3)] If $f_1$ and $f_2$ are anti-isometric, then $f_1$ is congruent to a surface in the associated family $\{(\hat{f_2})_\theta\}_{\theta \in \mathbb{R}}$ of the conjugate surface $\hat{f_2}$.
\end{itemize}
\end{theor}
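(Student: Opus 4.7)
The plan is to transfer Schwarz's proof to the split-complex (paracomplex) setting that is natural for timelike minimal surfaces. In timelike isothermal coordinates $(u,v)$ with induced metric $\Omega^{2}(-du^{2}+dv^{2})$, the combination $z=u+jv$ with $j^{2}=+1$ plays the role of a holomorphic variable; the Hopf differential $Q\,dz^{2}$ is paraholomorphic on a minimal surface, and the no-flat-points hypothesis forces $Q$ to be nowhere a null paracomplex number, so that $|Q|_{\mathrm{para}}^{2}$ is nonvanishing and of a fixed sign throughout the domain. The associated family arises by rotating the paraholomorphic Weierstrass-type data by $e^{j\theta}=\pcosh\theta+j\psinh\theta$, while the conjugate surface $\hat{f}$ arises from multiplication by $j$ and, as a direct computation of the fundamental forms shows (using the paracomplex Cauchy--Riemann-type equations $\hat{f}_{u}=f_{v},\ \hat{f}_{v}=f_{u}$), is anti-isometric to $f$.

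For item (1), I would construct two explicit flat timelike minimal surfaces via the null-curve decomposition $f(u,v)=\gamma_{1}(u)+\gamma_{2}(v)$. Flatness translates into explicit conditions on the pair $(\gamma_{1},\gamma_{2})$, producing a multi-parameter family of flat examples. Within this family I would select two members sharing the same induced metric but distinguished by a residual geometric invariant that survives flatness (such as the shape operator up to rigid motion, which can encode quasi-umbilic behavior not captured by $Q$ on the flat locus), and verify by direct computation that they are not related by any paracomplex rotation $e^{j\theta}$ followed by an isometry of $\mathbb{R}^{3}_{1}$, so that one does not belong to the congruent class of the associated family of the other.

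For item (2), the hypothesis ensures $|Q_{1}|_{\mathrm{para}}^{2}$ and $|Q_{2}|_{\mathrm{para}}^{2}$ are nowhere zero; the theorema egregium in Lorentz signature gives $K_{1}=K_{2}$, and the paracomplex Gauss equation for minimal surfaces $K=-|Q|_{\mathrm{para}}^{2}/\Omega^{4}$ forces $|Q_{1}|_{\mathrm{para}}^{2}=|Q_{2}|_{\mathrm{para}}^{2}$ in a common isothermal chart, and in particular that $Q_{1}$ and $Q_{2}$ have the same paracomplex type. The paraholomorphic quotient $Q_{1}/Q_{2}$ is then of constant positive paracomplex norm on a simply connected domain, hence a constant paracomplex unit of the form $\lambda=\pm e^{j\theta}$. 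This identifies $Q_{1}$ with the Hopf differential of $(f_{2})_{\theta}$, and the Lorentz version of Bonnet's fundamental theorem for surfaces upgrades the matching of both fundamental forms to an ambient congruence. Item (3) then follows at once: since $\hat{f}_{2}$ is anti-isometric to $f_{2}$ by the paragraph above, an anti-isometry $f_{1}\to f_{2}$ composed with the anti-isometry $f_{2}\to\hat{f}_{2}$ produces an isometry $f_{1}\to\hat{f}_{2}$, and item (2) supplies the associated family of $\hat{f}_{2}$.

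The main obstacles I anticipate are the following. First, establishing global timelike isothermal coordinates on a simply connected non-flat timelike minimal surface is not automatic, since null-isothermal coordinates are equally natural in Lorentz signature and one must rule out a global obstruction to choosing the timelike type consistently. Second, in Part (1) one has to keep the explicit examples simple enough to compute their full shape operators and definitively rule out congruence to every member of the one-parameter associated family, which is the subtle point absent in the Euclidean setting. Third, the discrete $\pm$ ambiguity in the classification of paracomplex units must be absorbed either into the choice of rigid motion supplied by Bonnet or into a reparametrization of the associated family used in the statement.
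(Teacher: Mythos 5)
Your treatment of parts (2) and (3) is correct but follows a genuinely different route from the paper's. You run the classical Schwarz argument through the Hopf differential: no flat points makes $Q$ nowhere null, the Gauss equation forces $|Q_1|^2=|Q_2|^2$, the paraholomorphic quotient $Q_1/Q_2$ has constant unit paracomplex modulus and is therefore constant, the two-branch structure of the hyperbola $\{|z|^2=1\}$ gives $Q_1/Q_2=\pm\phoro{e}^{j\theta}$, and Bonnet's theorem upgrades the matching of $\mathrm{I}$ and $\mathrm{II}$ to a congruence. The paper instead uses the identity $\mathrm{III}=K\,\mathrm{I}$ for minimal surfaces to deduce $\mathrm{III}_{f_1}=\mathrm{III}_{f_2}$ in both the isometric and anti-isometric cases, applies the fundamental theorem to the Gauss maps $\nu_1,\nu_2$ (here is where $K\neq 0$ is used, to make $\mathrm{III}$ nondegenerate) to get $h_1=h_2$ after an ambient isometry, and only then compares the Weierstrass forms $\eta_1,\eta_2$, whose quotient is the constant paracomplex unit. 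Both arguments rest on the same key lemma (a paraholomorphic function of constant \emph{nonzero} paracomplex modulus is constant --- note this fails for null values, e.g.\ $z(1+j)$, which is precisely why flat points must be excluded) and on the classification $\pm\phoro{e}^{j\theta}$, $\pm j\phoro{e}^{j\theta}$ of the unit hyperbolas. Your reduction of (3) to (2) by composing the given anti-isometry with the anti-isometry $f_2\to\hat{f_2}$ is cleaner than the paper's parallel treatment of the two cases; conversely, the paper's route through the third fundamental form pins down the Gauss map first, which you bypass. Your worry about global isothermal coordinates is not a real obstruction here, since both surfaces are assumed to be defined on simply connected Lorentz surfaces where the global Weierstrass (equivalently, conformal) representation is available.

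Part (1), however, is only a plan, not a proof, and the step you defer --- ``verify by direct computation that they are not related by any paracomplex rotation $\phoro{e}^{j\theta}$ followed by an isometry'' --- is exactly the point that needs an idea, since checking non-congruence against an entire one-parameter family member by member is awkward. The paper's device is to make this step trivial: it takes $f_1$ to be a timelike \emph{plane} (totally umbilic) and $f_2$ a totally quasi-umbilic surface (a B-scroll, written as $\varphi_2(u)+{}^t(v,0,v)$ with $\varphi_2$ an explicit null curve), arranged so that both have first fundamental form $3e^u\,du\,dv$. Since the associated family of a plane consists entirely of planes (immediate from $f_\theta=e^\theta\varphi+e^{-\theta}\psi$), and $f_2$ is not a plane, non-membership in the congruence class of the associated family follows with no computation. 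Your instinct to distinguish the two surfaces by the (non)diagonalizability of the shape operator is the right invariant, but to complete the proof you should either exhibit such a pair explicitly or, better, build the asymmetry into the choice of $f_1$ as above so that its whole associated family is rigid.
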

The statement (1) gives a counterexample to show that the same assertion as in Fact \ref{thm:Schwarz} for timelike minimal surfaces does not hold. The statements (2) and (3) show rigidities for isometric and anti-isometric classes of timelike minimal surfaces under the assumption that surfaces have no flat points, where a flat point is a point on which the Gaussian curvature $K$ vanishes. We remark that flat points of a timelike minimal surface consist of umbilic and quasi-umbilic points. The definitions of the associated family and the conjugate surface of a timelike minimal surface will be given in Definition \ref{def:associated}.

In the second half of the paper, we consider symmetry of timelike minimal surfaces. We show how symmetries of such surfaces propagate under various deformations including the above isometric and anti-isometric deformations. 
Each conformal timelike minimal surface $f\colon M \to \mathbb{R}^3_1$ from a (simply connected) Lorentz surface $M$ into $\mathbb{R}^3_1$ is realized as the real part of a paraholomorphic null curve $\Phi\colon M \to \mathbb{C}'^3$ into the paracomplex $3$-space $\mathbb{C}'^3$, that is, $f=\Re{\Phi}$. More specifically, a Weierstrass type representation formula was given by Konderak \cite{Konderak}, see Fact \ref{fact:Weierstrass1} for more details. 
Since the conformality is preserved under transformations of the form $f_A:=\Re{A\Phi}$, which is called the {\it Goursat transformation} of $f$ (see \cite{G}), for a matrix $A$ in the paracomplex conformal group
\[
\mathrm{CO}(1,2; \mathbb{C}') = \{ A\in \mathrm{M}(3,\mathbb{C}') \mid  {}^t\! AI_{1,2}A=cI_{1,2},\ c\in \mathbb{C}',\ c\bar{c}\neq 0 \},
\]
where ${}^t\! A$ is the transposed matrix of $A$ and $I_{1,2}=\text{diag}(-1,1,1)$.
A symmetry $g$ of $f$ is an isometry of the Lorentz surface $M$ satisfying $f\circ g = Of +t $ for some $O$ in the indefinite orthogonal group $O(1,2)$ of $\mathbb{R}^3_1$ and a vector $t\in \mathbb{R}^3_1$, and we call $O$ the {\it linear part} of $g$.  The set of such symmetries is denoted by $S_f(M)$ and is often referred to as the {\it space group} (see Definition \ref{def:space group}). In the above setting, we give the following conservation law of symmetry for Goursat transformations.

 \begin{theor}\label{thm:symmetry_Goursat_Introduction}
 Let $f\colon M\to \mathbb{R}^3_1$ be a simply connected timelike minimal surface, $f_A$ be its Goursat transformation for $A\in \mathrm{CO}(1,2; \mathbb{C}')$ and $g\in S_f(M)$ with the linear part $O$. Then the following statements hold.
 \begin{itemize}
 \item[(1)] When $g$ is orientation preserving,  there exists $\widetilde{O}\in \mathrm{O}(1,2)$ such that  $AO=\widetilde{O}A$ if and only if $g\in S_{f_A}(M)$ and its linear part is $\widetilde{O}$.
 
 \item[(2)] When $g$ is orientation reversing,  there exists $\widetilde{O}\in \mathrm{O}(1,2)$ such that  $AO=\widetilde{O}\bar{A}$ if and only if $g\in S_{f_A}(M)$ and its linear part is $\widetilde{O}$.

  \end{itemize}
 \end{theor}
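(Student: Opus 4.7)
The plan is to reduce everything to the Weierstrass datum $\Phi$. Using the representation $f = \Re\Phi$ from Fact \ref{fact:Weierstrass1}, I would first translate the symmetry relation $f\circ g = Of + t$ into an equation satisfied by $\Phi\circ g$. The key preliminary observation is the paracomplex analogue of a classical fact: a paraholomorphic map into $\mathbb{C}'^3$ with constant real part is itself constant. When $g$ is orientation-preserving it is paraholomorphic, so $\Phi\circ g$ is paraholomorphic, and applying this observation to the difference $\Phi\circ g - O\Phi$ yields
\[
\Phi\circ g = O\Phi + c
\]
for some constant $c\in\mathbb{C}'^3$ with $\Re c = t$. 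When $g$ is orientation-reversing it is anti-paraholomorphic, so $\Phi\circ g = \overline{\Psi}$ for a paraholomorphic $\Psi$; the same observation applied to $\Psi - O\Phi$ gives $\Psi = O\Phi + c$, hence $\Phi\circ g = O\bar\Phi + \bar c$.

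Next I would compute $f_A\circ g = \Re(A\,\Phi\circ g)$ and compare with $\widetilde{O}f_A + \tilde t = \Re(\widetilde{O}A\Phi) + \tilde t$. In the orientation-preserving case this rewrites as
\[
\Re\bigl((AO - \widetilde{O}A)\Phi\bigr) = \tilde t - \Re(Ac),
\]
a constant function on $M$. In the orientation-reversing case, using that $O$ is real so that $\Re(AO\bar\Phi) = \Re(\overline{AO\bar\Phi}) = \Re(\bar A O\Phi)$, the analogous reformulation becomes
\[
\Re\bigl((\bar A O - \widetilde{O}A)\Phi\bigr) = \tilde t - \Re(A\bar c).
\]
Invoking the preliminary observation again, these conditions are equivalent to $(AO - \widetilde{O}A)\Phi$, respectively $(\bar A O - \widetilde{O}A)\Phi$, being identically constant as $\mathbb{C}'^3$-valued functions on $M$. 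Taking paracomplex conjugates in the second matrix identity (using that $O$ and $\widetilde{O}$ are real) turns $\bar A O = \widetilde{O}A$ into $AO = \widetilde{O}\bar A$, matching the statement.

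The main obstacle is the passage from ``$B\Phi$ is constant on $M$'' to ``$B = 0$'' for a matrix $B\in \mathrm{M}(3,\mathbb{C}')$. I expect to handle this by differentiating to obtain $B\,d\Phi \equiv 0$ and then invoking the non-degeneracy of the null immersion $\Phi$: as the base point varies, the null vectors $d\Phi$ sweep out a subspace of $\mathbb{C}'^3$ large enough that no nonzero matrix annihilates them all, unless the image of $f$ is contained in an affine subspace of $\mathbb{R}^3_1$. The reverse implications (``$\Leftarrow$'' in both cases) are direct, since the computations above can be read backwards without requiring any non-degeneracy: once the matrix identity holds, one simply sets $\tilde t := \Re(Ac)$ or $\Re(A\bar c)$ respectively and verifies $f_A\circ g = \widetilde{O}f_A + \tilde t$.
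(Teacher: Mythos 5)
Your proposal is correct and follows essentially the same route as the paper: the paper's Lemma \ref{lemma:gomega}, which gives $g^*\omega=O\omega$ (orientation preserving) or $g^*\omega=O\bar{\omega}$ (orientation reversing) via the Hodge star, is exactly the differential form of your integrated identity $\Phi\circ g=O\Phi+c$ (resp.\ $O\bar{\Phi}+\bar{c}$), and the remainder is the same conjugation by $A$ in both directions. The only difference is that you are more explicit than the paper about the non-degeneracy needed in the converse direction --- passing from $(AO-\widetilde{O}A)\,d\Phi\equiv 0$ to $AO=\widetilde{O}A$ requires the surface to be full, a point the paper dispatches with ``by taking the derivative of this equation.''
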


Symmetry conservation under deformations of minimal surfaces has often been discussed. A well-known classical symmetry correspondence is the fact that the line symmetry with respect to a straight line on a minimal surface in $\mathbb{R}^3$ (or a timelike minimal surface in $\mathbb{R}^3_1$) corresponds to the planar symmetry of the conjugate surface with respect to a plane orthogonal to the line, see  \cite{DHS, Karcher, KKSY}, for example.  
Similarly, as discussed by Kim, Koh, Shin and Yang \cite{KKSY}, there is also a symmetry correspondence between shrinking singularity (also called conelike singularity) and folding singularity on timelike minimal surfaces in $\mathbb{R}^3_1$. 
All of these symmetries are due to the reflection principle, which are derived from orientation reversing isometries of the form $g(z)=\bar{z}$. Hence, these symmetry relations are obtained by considering the Goursat transformation $f_J$ for $J=jI_3$ in Theorem \ref{thm:symmetry_Goursat_Introduction}, where $j$ is the imaginary unit of $\mathbb{C}'$ and $I_3$ is the identity matrix. Furthermore, by considering the Goursat transformation $f_D$ by a special matrix $D$ in Section \ref{sec:duality}, these symmetries about lines, planes, shrinking singularities, and folding singularities can be unified in Corollary \ref{cor:quadruple}. See Example \ref{example:Enneper} for a concrete example and Figure \ref{Fig:Intro}.

For translation symmetry, Meeks \cite{Meeks} showed a necessary and sufficient condition for the conjugate minimal surface in the Euclidean space to have a translation symmetry. Leschke and Moriya \cite{LM} also revealed results on the conservation of translation symmetry of simple factor dressing of minimal surfaces in $\mathbb{R}^3$ and $\mathbb{R}^4$, which is also a special kind of Goursat transformations. 
Since translation symmetry corresponds to the case where $O=I_3$ in Theorem \ref{thm:symmetry_Goursat_Introduction}, we also obtain a conservation law of translation symmetry in Corollary \ref{cor:translation}.
 
Similarly, as an application of Theorem \ref{thm:symmetry_Goursat_Introduction}, by considering specific matrices in $\mathrm{CO}(1,2; \mathbb{C}')$ as
\[
J(\theta)=\begin{pmatrix}
\phoro{e}^{j\theta} & 0& 0 \\
0 & \phoro{e}^{j\theta} & 0 \\
0 & 0 & \phoro{e}^{j\theta}
\end{pmatrix},\quad 
\hat{J}(\theta)=\begin{pmatrix}
j\phoro{e}^{j\theta} & 0& 0 \\
0 & j\phoro{e}^{j\theta} & 0 \\
0 & 0 & j\phoro{e}^{j\theta}
\end{pmatrix}\ \text{ and }\ 
A(\lambda)=\begin{pmatrix}
\frac{\lambda+\frac{1}{\lambda}}{2} & j\frac{\lambda-\frac{1}{\lambda}}{2} & 0 \\
j\frac{\lambda-\frac{1}{\lambda}}{2} & \frac{\lambda+\frac{1}{\lambda}}{2} & 0 \\
0 & 0 & 1
\end{pmatrix}
\]
where $\theta\in \mathbb{R}$ and $\lambda >0$, we obtain symmetry relations for the isometric deformation $\{f_\theta\}=\{f_{J(\theta)}\}$, the anti-isometric deformation $\{\hat{f}_\theta\}=\{f_{\hat{J}(\theta)}\}$ in Corollary \ref{cor:Spacegroup}, which is the Lorentzian counter part of the result for minimal surfaces by Meeks \cite[Theorem 5.5]{Meeks} and for the deformations $\{f_{A(\lambda)}\}$ called the {\it L\'opez-Ros} deformations discussed in Section 4.3. 

\begin{figure}
\begin{center}
\includegraphics[clip,scale=0.5,bb=0 0 900 600]{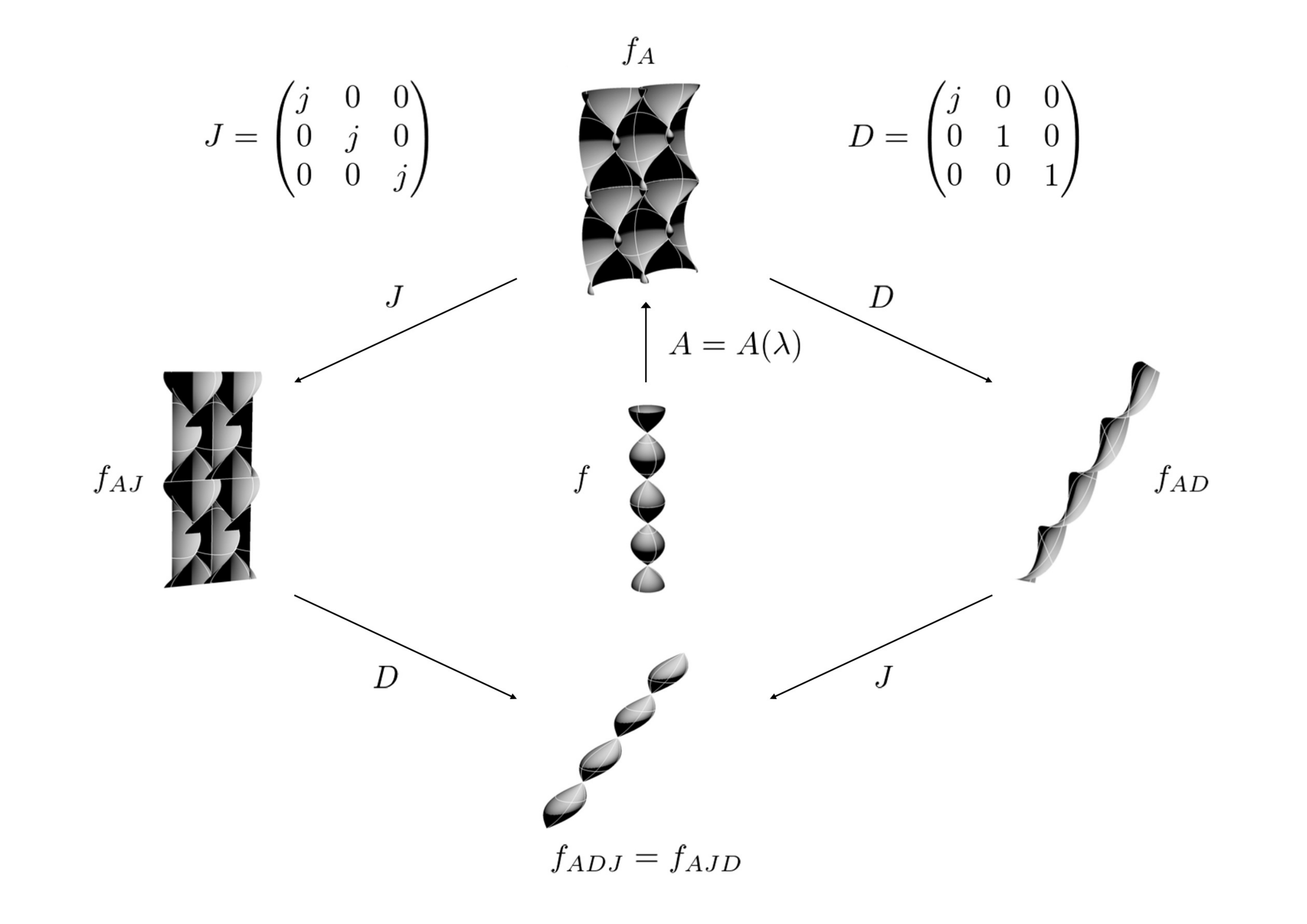}
\end{center}
\caption{Symmetries under Goursat transformations are related each others: two orientation preserving translation symmetries degenerating in one direction on the initial surface $f$ (the elliptic catenoid) are preserved as discussed in Corollary \ref{cor:translation}. In addition, four orientation reversing symmetries, a planar symmetry of $f_A$, a line symmetry of $f_{AJ}$, a folded symmetry of $f_{AD}$ and a point symmetry $f_{ADJ}$ also correspond to each other as discussed in Corollary \ref{cor:quadruple}.}\label{Fig:Intro}
\end{figure}

This article is organized as follows: In Section 2, we describe some notions of paracomplex analysis and timelike minimal surfaces. In Section 3, we investigate isometric and anti-isometric classes of timelike minimal surfaces. The proof of Theorem \ref{Theorem1_Intro} is given in two separate parts: the proofs of Proposition \ref{prop:counter} and Theorem \ref{thm:timelikeSchwarz}. Regarding (1) of Theorem \ref{Theorem1_Intro}, we also determine all flat timelike minimal surfaces in Proposition \ref{FlatMinimal_rev}. In Section 4, we give a proof of Theorem \ref{thm:symmetry_Goursat_Introduction} and several applications of it for well-known import deformations and transformations. Finally, in Section \ref{sec:Ex} we describe relationships between the symmetries of various concrete examples in terms of Goursat transformations. 

\section{Preliminary}
First, we briefly recall the theories of paracomplex analysis and timelike minimal surfaces.
For a more detailed introduction, we refer the readers to works such as \cite{A,ACO,IT,Konderak,W} and their references.

\subsection{Paracomplex analysis}

A {\it paracomplex number} (or {\it split-complex number}) is a number $z$ of the form $z=x+jy$, where $x,y\in \mathbb{R}$ and $j$ is the imaginary unit satisfying $j^2=1$. We denote the set of such paracomplex numbers by $\mathbb{C}'$ and refer it as the {\it paracomplex plane}.  Just as for complex numbers, $\mathbb{C}'$ forms an algebra over $\mathbb{R}$ and one can define the notions of 
\begin{itemize}
\item the \emph{real part} $\Re z := x$ and the \emph{imaginary part} $\Im z := y$ of $z=x+jy$, 
\item the {\it conjugate} $\bar{z}$ of $z=x+jy$ as $\bar{z}:=x-jy$, and 
\item the squared modulus of $z$ defined by $|z|^2 := z\bar{z}=x^2-y^2$.
\end{itemize}
It should be remarked that it is possible for the relation $|z|^2<0$ to hold, and the relation $|jz|^2=-|z|^2$ holds.

Given a paracomplex function $\phoro{f} : \Sigma \subset \mathbb{C}' \to \mathbb{C}'$ where $\Sigma$ is a simply-connected domain, we call $\phoro{f}$ \emph{paraholomorphic} if $\phoro{f}$ satisfies
	\begin{equation}\label{eqn:cauchyRiemann}
		\phoro{f}_{\bar{z}} = \partial_{\bar{z}} \phoro{f} = 0,
	\end{equation}
where $\partial_z := \frac{1}{2}\left(\partial_x + j \partial_y \right)$ and $\partial_{\bar{z}} := \frac{1}{2}\left(\partial_x - j \partial_y \right)$ are the paracomplex Wirtinger derivatives. 
Furthermore, we call a function $\phoro{f} : \Sigma \to \mathbb{C}'$ \emph{parameromorphic} if it is $\mathbb{C}'$-valued on an open dense subset of $\Sigma$ and for arbitrary $p \in \Sigma$, there exists a paraholomorphic function $\phoro{g}$ such that $\phoro{fg}$ is paraholomorphic near $p$.

We recall some elementary paracomplex analytic functions used in this paper. The exponential function $\phoro{e}^z$ is defined by
	\[
		\phoro{e}^z := \sum_{n = 0}^{\infty} \frac{z^n}{n!}.
	\]
We can see the paracomplex version of Euler's formula
	\[
		\phoro{e}^{j \theta} = \cosh{\theta} + j \,\sinh{\theta},\quad \theta \in \mathbb{R}.
	\]
By using this function, each element of the hyperbola $\{z\in \mathbb{C}' \mid |z|^2 =1\}$ or $\{z\in \mathbb{C}' \mid |z|^2 =-1\}$ is written as $z=\pm \phoro{e}^{j \theta}$ or $z=\pm j\phoro{e}^{j \theta}$ for some $\theta$, respectively, which will play an important role of the proof of Theorem \ref{thm:timelikeSchwarz}. 

The paracomplex circular functions are also defined via analytic continuation from the real counterparts as follows.
\[
\pcos z := \sum_{n = 0}^{\infty} (-1)^n \frac{z^{2n}}{(2n)!},
				\quad
			\psin z := \sum_{n = 0}^{\infty} (-1)^n \frac{z^{2n + 1}}{(2n + 1)!},\quad
			\ptan z := \frac{\psin z}{\pcos z}.
\]
The functions $\pcos z$ and $\psin z$ are paraholomorphic on $\mathbb{C}'$ satisfying the relations
\[
\pcos{z}= \cos{x}\cos{y}-j\sin{x}\sin{y}, \quad \psin{z}=\sin{x}\cos{y}+j\cos{x}\sin{y},
\]
where $z=x+jy$. The function $\ptan{z}$ is parameromorphic on $\mathbb{C}'$.

\subsection{Timelike minimal surfaces and their symmetry groups}
Let $\mathbb{R}^3_1$ be the Lorentz-Minkowski 3-space with the indefinite inner product
\[
\langle \prescript{t\!}{}(x_1,x_2,x_3), \prescript{t\!}{}(y_1,y_2, y_3)\rangle = -x_1y_1+x_2y_2+x_3y_3.
\]
A surface $f\colon M \to \mathbb{R}^3_1$ from a $2$-dimensional manifold $M$ to $\mathbb{R}^3_1$ is said to be {\it timelike} if its first fundamental form $\mathrm{I}_f=\langle df, df\rangle$ is Lorentzian. The Gaussian curvature $K$ and the mean curvature $H$ of a timelike surface $f$ are defined as follows.
\[
K := \det{S},\quad H:=\frac{1}{2}\mathrm{tr}{S},
\]
where $S$ is the shape operator of $f$. One of the remarkable properties of timelike surfaces is the diagonalizability of the shape operator. The shape operator $S$ of a timelike surface is not always diagonalizable over $\mathbb{R}$, that is, principal curvatures can be complex numbers. More precisely, there are three possibilities of the diagonalizability of $S$ at each point of a timelike surface in $\mathbb{R}^3_1$ as follows:
\vspace{0.2cm}

 \begin{itemize}
\item[(i)] $S$ is diagonalizable over $\mathbb{R}$. In this case $H^2 - K \geq 0$ with the equality holds on umbilic points.
\item[(ii)] $S$ is diagonalizable over $\mathbb{C}\setminus \mathbb{R}$. In this case $H^2 - K < 0$. 
\item[(iii)] $S$ is non-diagonalizable over $\mathbb{C}$. In this case $H^2 - K = 0$. Each point satisfying this condition is called {\it quasi-umbilic} (see \cite{Clelland}).
\end{itemize}
\vspace{0.2cm}

In this paper, we discuss {\it timelime minimal surfaces}, which are timelike surfaces with $H=0$. Hence, the above relations show that the diagonalizability of $S$ is directly related to the sign of $K$, and there is no restriction of the sign of $K$ for timelike minimal surfaces. This is quite different from the situation where $K\leq 0$ holds for minimal surfaces in $\mathbb{R}^3$ and $K\geq 0$ holds for maximal surfaces in $\mathbb{R}^3_1$.

By using paracomplex analysis, timelike minimal surfaces admit a Weierstrass type representation \cite{Konderak} on Lorentz surfaces (see also \cite{A} for surfaces with singularities):

\begin{fact}\label{fact:Weierstrass1}
	Any timelike minimal surface $f\colon M \to \mathbb{R}^3_1$ can be represented as
		\begin{equation}\label{eq:pW}
			f(z) = \Re\int \prescript{t\!}{}(-(1 + h^2),j(1 - h^2), 2h)\eta
		\end{equation}
	over a simply-connected Lorentz surface $M$ on which $h$ is a parameromorphic, while $\eta$ and $h^2\eta$ are paraholomorphic.
	Furthermore, the induced metric of the surface becomes
		\begin{equation}\label{eqn:wConformal}
			 \mathrm{I}_{f} = -(1 - |h|^2)^2 |\eta |^2.
		\end{equation}
	We call $(h, \eta)$ the \emph{Weierstrass data} of the timelike minimal surface $f$.
\end{fact}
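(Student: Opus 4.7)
The plan is to follow the classical derivation of the Weierstrass representation for minimal surfaces in $\mathbb{R}^3$, with complex analysis replaced by paracomplex analysis throughout. First I would invoke the existence of global paracomplex isothermal coordinates on the simply connected Lorentz surface $M$: a global coordinate $z = u + jv$ in which $\mathrm{I}_f = \lambda(z)\,dz\,d\bar z$ for some nowhere-zero real function $\lambda$, equivalently $\langle f_u, f_u\rangle = -\langle f_v, f_v\rangle$ and $\langle f_u, f_v\rangle = 0$. This is the Lorentzian analogue of the uniformization statement and is the only non-trivial analytic input.

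With such coordinates in place, set $\Phi := 2 f_z = f_u + j f_v \in \mathbb{C}'^3$. Conformality is equivalent to the algebraic condition $\langle \Phi, \Phi \rangle = 0$, so that $\Phi$ lies in the null cone of $\mathbb{C}'^3$, while regularity becomes $\langle \Phi, \bar\Phi \rangle \neq 0$. Using $4 \partial_z \partial_{\bar z} = \partial_u^2 - \partial_v^2$ together with the fact that in a Lorentzian isothermal chart the mean curvature vector is proportional to the wave operator $\Box f$, the minimality condition $H = 0$ is equivalent to paraholomorphicity of $\Phi$. I would then parameterize the null cone of $\mathbb{C}'^3$ by setting
\[
\eta := \tfrac{1}{2}(-\Phi_1 + j\Phi_2), \qquad h := \frac{\Phi_3}{2\eta}.
\]
The algebraic identity $-(1+h^2)^2 + (1-h^2)^2 + 4h^2 = 0$ ensures that every null $\Phi$ factors as $\Phi = \prescript{t\!}{}(-(1+h^2),\, j(1-h^2),\, 2h)\,\eta$, and moreover $h^2 \eta = \tfrac{1}{2}(-\Phi_1 - j\Phi_2)$. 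Paraholomorphicity of $\Phi$ then yields paraholomorphicity of $\eta$ and $h^2\eta$, while $h$ is parameromorphic as a ratio of paraholomorphic functions.

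Because $M$ is simply connected and $\Phi$ is paraholomorphic, the $\mathbb{R}^3_1$-valued $1$-form $\Re(\Phi\,dz)$ is closed, hence exact and equal to $df$, so $f = \Re \int \Phi\,dz$ recovers the surface; upon absorbing $dz$ into the $1$-form $\eta$ this is exactly formula \eqref{eq:pW}. For the induced metric, a direct expansion using $\bar{j} = -j$ gives
\[
\langle \Phi, \bar\Phi \rangle = |\eta|^2 \bigl[ -(1+h^2)(1+\bar h^2) - (1-h^2)(1-\bar h^2) + 4|h|^2 \bigr] = -2 |\eta|^2 (1 - |h|^2)^2,
\]
and combining with $\mathrm{I}_f = \tfrac{1}{2}\langle \Phi, \bar\Phi\rangle\,dz\,d\bar z$ (which follows from $\langle \Phi, \Phi\rangle = \langle \bar\Phi, \bar\Phi\rangle = 0$ and the identity $du^2 - dv^2 = dz\,d\bar z$) yields \eqref{eqn:wConformal}.

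The main obstacle is the first step: the existence of global paracomplex isothermal coordinates on a simply connected Lorentz surface. In contrast to the Riemannian case, where the uniformization theorem supplies them, the Lorentzian counterpart requires solving a hyperbolic rather than elliptic PDE, and the global statement on simply connected Lorentz surfaces is considerably more delicate than its Riemannian analogue. Once this is accepted as input from the general theory of Lorentz surfaces, the remainder of the argument is a routine paracomplex calculation.
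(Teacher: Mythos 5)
The paper does not prove this statement: it is imported verbatim as a Fact from Konderak \cite{Konderak} (see also \cite{A}), so there is no internal proof to compare your argument against. On its own merits, your derivation is the standard one and is essentially correct: the identification $\Phi=2f_z$, the equivalence of conformality with $\langle\Phi,\Phi\rangle=0$ and of $H=0$ with $\partial_{\bar z}\Phi=0$, the factorization of the null cone via $\eta=\tfrac12(-\Phi_1+j\Phi_2)$, $h=\Phi_3/(2\eta)$, $h^2\eta=\tfrac12(-\Phi_1-j\Phi_2)$, and the computation $\langle\Phi,\bar\Phi\rangle=-2|\eta|^2(1-|h|^2)^2$ all check out against the normalizations in \eqref{eq:pW} and \eqref{eqn:wConformal}. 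Two small corrections of emphasis. First, the step you single out as the main obstacle is not where the difficulty lies: for a Lorentzian metric, local conformal coordinates are obtained by integrating the two null line fields and passing to $x=(u+v)/2$, $y=(u-v)/2$, i.e.\ by solving ODEs rather than a hyperbolic PDE, so locally this is \emph{easier} than the Riemannian Korn--Lichtenstein step; globally no single chart is needed, since a Lorentz surface carries a paracomplex structure by definition and simple connectivity is used only to make the path integral of the closed form $\Re(\Phi\,dz)$ single-valued. Second, because $\mathbb{C}'$ has zero divisors, $h=\Phi_3/(2\eta)$ fails to be $\mathbb{C}'$-valued not only where $\eta=0$ but wherever $|\eta|^2=0$; this is precisely what the parameromorphic hypothesis on $h$ (with $\eta$ and $h^2\eta$ paraholomorphic) is designed to absorb, and your final sentence on this point should be made with that caveat in mind.
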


\begin{remark}\label{remark:Gaussmap}
The parameromorphic function $h$ is identified with the Gauss map of $f$ as follows.
Let $S^2_1=\{\prescript{t\!}{}(x_1,x_2,x_3)\in \mathbb{R}^3_1 \mid -x_1^2+x_2^2+x_3^2=1\}$ be the unit pseudosphere and $H^1=\{\prescript{t\!}{}(x_1,x_2)\in \mathbb{R}^2_1 \mid -x_1^2+x_2^2=-1\}$ be the hyperbola on the $x_1x_2$-plane, identified with the Minkowski plane $\mathbb{R}^2_1$. We consider the following stereographic projection $\mathcal{P}$ with respect to the point $(0,0,1)$
\[
\mathcal{P}(\bm{x}) = \prescript{t\!}{}{\left(\frac{x_1}{1-x_3}, \frac{x_2}{1-x_3}\right)},\quad \bm{x}=\prescript{t\!}{}(x_1,x_2,x_3)
\]
from $S^2_1\setminus \{x_3\neq 1\}$ to $\mathbb{R}^2_1\setminus H^1$. 
Since we can take a unit normal vector field $\nu$ of \eqref{eq:pW} as
\[
\nu=\frac{1}{1-|h|^2}\prescript{t\!}{}{\left(2\Re h,2\Im h, 1+|h|^2\right)},
\]
we have the relation $\mathcal{P}\circ \nu = h$ where we identify the $x_1x_2$-plane with $\mathbb{C}'$.

\end{remark}
Similar to the minimal surfaces and maximal surfaces cases, timelike minimal surfaces also admit associated families and conjugate surfaces as follows.
\begin{definition}[Associated family and conjugate surface]\label{def:associated}
Let $f$ be a timelike minimal surface written as \eqref{eq:pW} with Weierstrass data $(h,\eta)$, we define the {\it associated family} $\{f_\theta\}_{\theta \in \mathbb{R}}$ consists of the timelike minimal surface $f_\theta$ which is defined by the Weierstrass data $(h,\phoro{e}^{j\theta}\eta)$. 
We also call the timelike minimal surface $\hat{f}$ defined by the Weierstrass data $(h,j\eta)$ the {\it conjugate surface} of $f$.
\end{definition}

\begin{remark}\label{rema:asso_family}
Different from minimal and maximal surfaces, the conjugate surface $\hat{f}$ is not a member of the associated family $\{f_\theta\}_{\theta \in \mathbb{R}}$. 
By \eqref{eqn:wConformal} and the relation $|jz|^2=-|z|^2$, each $f_\theta$ is isometric to the original one $f=f_1$ and $\hat{f}$ is anti-isometric to $f$.
\end{remark}

The Weierstrass-type representation formula \eqref{eq:pW} gives a conformal parametrization for timelike minimal surfaces. In addition, timelike surfaces have the following characterization on {\it null coordinates} $(u,v)$, on which the first fundamental form of the surface is written as $ \mathrm{I}_{f} =\Lambda dudv$ for some function $\Lambda$.

\begin{fact}[\cite{McNertney}]\label{Fact:McNertney}
If $\varphi(u)$ and $\psi(v)$ are null curves in $\mathbb{R}^3_1$ such that $\varphi'(u)$ and $\psi'(v)$ are linearly independent for all $u$ and $v$, then
\begin{equation}\label{null curves decomposition}
f(u,v)=\varphi(u)+\psi(v)
 \end{equation}
is a timelike minimal surface. Conversely, any timelike minimal surface can be written locally as the equation \eqref{null curves decomposition} for some two null curves.
\end{fact}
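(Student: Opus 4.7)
The plan is to verify both directions by computing the first and second fundamental forms in null coordinates.

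For the forward direction, suppose $f(u,v)=\varphi(u)+\psi(v)$ with $\varphi,\psi$ null and $\varphi',\psi'$ linearly independent. I would first observe that $f_u=\varphi'(u)$ and $f_v=\psi'(v)$, so the coefficients of the first fundamental form satisfy $E=\langle\varphi',\varphi'\rangle=0$ and $G=\langle\psi',\psi'\rangle=0$, while $F=\langle\varphi',\psi'\rangle$. A key point is that in $\mathbb{R}^3_1$ two linearly independent null vectors cannot be orthogonal: otherwise they would span a totally null $2$-plane, but the maximal dimension of a totally null subspace in signature $(-,+,+)$ is one. Hence $F\neq 0$, $EG-F^{2}=-F^{2}<0$, and $f$ is timelike. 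For minimality, one computes $f_{uv}=0$ directly, which gives $M=\langle f_{uv},\nu\rangle=0$. Since for a metric with $E=G=0$ the mean curvature formula simplifies to $H=M/F$, this yields $H\equiv 0$.

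For the converse, let $f$ be a timelike minimal surface and introduce null coordinates $(u,v)$, in which $\mathrm{I}_f=\Lambda\,du\,dv$ (the existence of such coordinates follows from the existence of two independent real null directions in the tangent plane at each point of a timelike surface). The same formula $H=M/F$ with $F=\Lambda/2$ forces $M=0$. I would then compute the Christoffel symbols directly from $g_{uv}=\Lambda/2$, $g_{uu}=g_{vv}=0$, and check that $\Gamma^{u}_{uv}=\Gamma^{v}_{uv}=0$ (every term in the standard formula either carries a factor $g^{uu}=0=g^{vv}$ or cancels as $\partial_{v}g_{uv}-\partial_{v}g_{uv}$). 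Combining these vanishings with $M=0$ in the Gauss formula $f_{uv}=\Gamma^{u}_{uv}f_u+\Gamma^{v}_{uv}f_v+M\nu$ gives $f_{uv}\equiv 0$. Integrating twice produces $f(u,v)=\varphi(u)+\psi(v)$ for some curves $\varphi,\psi$; the vanishing $E=G=0$ then says that $\varphi,\psi$ are null, and $\varphi',\psi'$ must be linearly independent since otherwise $F=\langle\varphi',\psi'\rangle$ would vanish at some point, contradicting the non-degeneracy of $\mathrm{I}_f$.

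The main obstacle is the converse direction, specifically proving $f_{uv}=0$ cleanly: one must simultaneously control both the tangential component (through the connection) and the normal component (through the second fundamental form). The computation is short once the metric is expressed in null coordinates, but it relies essentially on the timelike hypothesis, which guarantees the existence of null coordinates in the first place and supplies the reciprocal $2/\Lambda$ appearing in $g^{uv}$.
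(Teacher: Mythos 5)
Your argument is correct. Note, however, that the paper does not prove this statement at all: it is recorded as a Fact cited to McNertney's thesis, so there is no in-paper proof to compare against. Your proof is the standard one and is essentially complete: the observation that two linearly independent null vectors in signature $(-,+,+)$ cannot be orthogonal correctly yields $F\neq 0$, the formula $H=M/F$ for a metric with $E=G=0$ is right (it is the same computation as the shape operator $S=\mathrm{I}^{-1}\mathrm{II}$ displayed in the paper's proof of Proposition 3.2), and in the converse direction the vanishing of $\Gamma^u_{uv}$ and $\Gamma^v_{uv}$ together with $M=0$ does give $f_{uv}\equiv 0$, from which the decomposition $f=\varphi(u)+\psi(v)$ and the nullity and independence of $\varphi',\psi'$ follow as you say. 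The only ingredient you quote rather than prove is the local existence of null coordinates on a timelike surface (simultaneous rectification of the two transverse null line fields), which is standard and appropriate to cite.
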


\begin{remark}\label{rema:asso_family2}
We can easily check that the associated family $\{f_\theta\}_{\theta \in \mathbb{R}}$ and the conjugate surface $\hat{f}$ in Definition \ref{def:associated} correspond to deformations of the generating null curves $\varphi$ and $\psi$ in \eqref{null curves decomposition} as follows. 
\begin{align*}\label{null curves decomposition2}
f_\theta(u,v)=e^\theta \varphi(u)+e^{-\theta}\psi(v),\quad \hat{f}(u,v)= \varphi(u)- \psi(v).
 \end{align*}
These surfaces have the relationship $f_\theta =\cosh{\theta}f+\sinh{\theta}\hat{f}$.
 \end{remark}

In Section \ref{sec:Goursat}, we will see that symmetries of timelike minimal surfaces can be  controlled under various deformations including the above isometric and anti-isometric deformations. 
For this purpose, we define the group of symmetries of a timelike minimal surface based on the work \cite{Meeks}.

\begin{definition}[Space group]\label{def:space group}
Let $f\colon M \to \mathbb{R}^3_1$ be a timelike minimal surface. The {\it space group} $S_f(M)$ of $f$ is the group of isometries of $M$ induced by symmetries of $f(M)$ in $\mathbb{R}^3_1$, which consists of an isometry $g\colon M \to M$ such that $f(g(p))=Of(p)+t$ for a matrix $O\in O(1,2)$, a vector $t\in \mathbb{R}^3_1$  and arbitrary $p\in \mathbb{R}^3_1$. 
\[
  \begin{diagram}
    \node[2]{M}
    \arrow[2]{e,b}{f}
    \arrow[1]{s,l}{g}
    \node[2]{f(M)\subset \mathbb{R}^3_1}
    \arrow[1]{s,r}{\text{isometry of $\mathbb{R}^3_1$}} \\
    \node[2]{M} 
     \arrow[2]{e,t}{f}
    \node[2]{f(M)\subset \mathbb{R}^3_1}
  \end{diagram}
\]
We call the matrix part $O$ the {\it linear part} of $g\in S_f(M)$.
We also denote the orientation preserving subgroup of $S_f(M)$ by $S^\circ_f(M)$, and the orientation reversing elements of $S_f(M)$ by $S^r_f(M)$.
\end{definition}

\section{Rigidity theorem}

As we saw in Remark \ref{rema:asso_family} and Introduction, the associated family gives an isometric deformation and the converse is true as in the sense of Fact \ref{thm:Schwarz}. For the proof of Fact \ref{thm:Schwarz}, the fact that flat points of non-planar minimal surfaces are isolated and real analyticity play an important role, as one can see in \cite[p.275]{Spivak}, for example. By the same reasons, the same result obviously holds for maximal surfaces in $\mathbb{R}^3_1$.

However, flat points of timelike minimal surfaces, which are points with $K=0$ consist of umbilics and quasi-umbilics and they are not isolated in general. Hence, we can construct a counterexample to show that the same assertion as in Fact \ref{thm:Schwarz} for timelike minimal surfaces does not hold.

\begin{proposition}\label{prop:counter}
There exist two timelike minimal surfaces that are isometric each other but one of which does not belong to the congruent class of the associated family of the other.
\end{proposition}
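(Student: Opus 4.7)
The plan is to take $f_1$ to be a timelike plane in $\mathbb{R}^3_1$ and $f_2$ to be a non-planar flat timelike minimal surface. Because both are intrinsically flat Lorentzian surfaces, they will be locally isometric. On the other hand, by Definition \ref{def:associated} the associated family $\{(f_1)_\theta\}_{\theta\in\mathbb{R}}$ is obtained by replacing the Weierstrass data $(h,\eta)$ with $(h,\phoro{e}^{j\theta}\eta)$, which leaves the parameromorphic function $h$ untouched; and by Remark \ref{remark:Gaussmap}, $h$ encodes the Gauss map via the stereographic projection. Hence every surface in the associated family of a plane is again planar (all having the same constant Gauss map), so a non-planar $f_2$ cannot be congruent to any member of $\{(f_1)_\theta\}$.

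Concretely I take $f_1(u,v)=\prescript{t\!}{}(u,v,0)$, the $x_1x_2$-plane with induced metric $-du^2+dv^2$ and Weierstrass data $(0,dz)$. For $f_2$ I invoke the null-curve decomposition in Fact \ref{Fact:McNertney} and set $f_2(u,v)=u\,a+\psi(v)$, with the null vector $a=\prescript{t\!}{}(1,0,1)$ and the non-straight null curve $\psi(v)=\prescript{t\!}{}(\sinh v,v,\cosh v)$. The derivative $\psi'(v)=\prescript{t\!}{}(\cosh v,1,\sinh v)$ is null and everywhere linearly independent of $a$ (their middle entries differ), so $f_2$ is a bona fide timelike minimal surface. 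As $f_2$ is a cylinder over $\psi$ along the constant ruling $a$, the only non-vanishing component of its second fundamental form is $\langle\psi''(v),\nu\rangle\,dv^2$; in the null coordinates $(u,v)$, where the metric is purely off-diagonal, this forces $K\equiv 0$, so $f_2$ is flat.

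A short calculation gives $\mathrm{I}_{f_2}=2\langle a,\psi'(v)\rangle\,du\,dv=-2e^{-v}\,du\,dv$; the substitution $V:=-e^{-v}$ converts this to $-2\,du\,dV$, the flat Lorentzian metric in null coordinates, which (up to a harmless rescaling) matches the null-coordinate form of $\mathrm{I}_{f_1}$ and exhibits the desired isometry. Meanwhile $\psi'(v)$ rotates non-trivially with $v$, so the tangent plane of $f_2$ genuinely varies and its Gauss map is non-constant; in particular $f_2$ is not planar, and by the observation of the first paragraph it lies outside the congruent class of $\{(f_1)_\theta\}$. The main step requiring real care is nothing conceptually deep but the verification that the Gauss map of $f_2$ is genuinely non-constant (equivalently, that the cylinder $f_2$ is not a plane); this reduces to the elementary check that $\psi''(0)$ is not contained in the span of $\{a,\psi'(0)\}$, which is immediate in the example above.
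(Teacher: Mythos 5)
Your construction follows essentially the same route as the paper's: pair a timelike plane with a non-planar flat cylinder over a null curve (the paper even uses the same lightlike ruling direction ${}^t(1,0,1)$ and arranges both first fundamental forms to be literally $3e^u\,du\,dv$ on the same domain), then observe that the associated family of a plane consists only of planes. One small point to tidy: your coordinate change $V=-e^{-v}$ maps onto the half-plane $\{V<0\}$ only, so the full plane and the full cylinder are not globally isometric (one is complete, the other is not); restrict $f_1$ to the planar piece over that half-plane, or rescale $\psi$ so the metrics agree on all of $\mathbb{R}^2$, and the argument is complete.
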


\begin{proof}
Let us construct two flat timelike minimal surfaces that are isometric each other, one of which is a timelike plane and the other is not as follows.
\begin{align*}
f_1(u,v)&=\varphi_1(u)+{}^t(v,0,v),\quad \varphi_1(u)={}^t(-2,\sqrt{3},1)e^u/2,\\
f_2(u,v)&=\varphi_2(u)+{}^t(v,0,v),\quad \varphi_2(u)=\prescript{t\!}{}{\left(-u-\frac{3}{4}e^u,2\sqrt{3}e^{u/2}, -u+\frac{3}{4}e^u\right)},
\end{align*}
where $\varphi_1$ and $\varphi_2$ are null curves. A straightforward calculation shows that the first fundamental forms $\mathrm{I}_{f_1}$ and $\mathrm{I}_{f_2}$ of the surfaces $f_1$  and $f_2$ satisfy
\[
\mathrm{I}_{f_1}= \mathrm{I}_{f_2} =3e^ududv,
\]
and hence two surfaces $f_1$ and $f_2$ are isometric.

By definition, the surface $f_1$ is a timelike plane, which is totally umbilic. The surface $f_2$ is totally quasi-umbilic because it has a non-diagonalizable shape operator of the form
\[
\begin{pmatrix}
0 & 0 \\
\sqrt{3}e^{-u/2} & 0 
\end{pmatrix}.
\]

Finally, by Remark \ref{rema:asso_family2}, we can see that the associated family of a plane remains plane. Therefore, we obtain the desired result.
\end{proof}

Proposition \ref{prop:counter} shows that there are many flat timelike minimal surfaces. Here, we determine such flat timelike minimal surfaces. 
\begin{proposition}\label{FlatMinimal_rev}
Any flat timelike minimal surface in $\mathbb{R}^3_1$ is a cylinder whose base curve and director curve are lightlike.
\end{proposition}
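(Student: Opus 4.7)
The plan is to use the null-curve decomposition of Fact \ref{Fact:McNertney} to express the surface locally as $f(u,v)=\varphi(u)+\psi(v)$ with linearly independent null tangents, and to translate the flatness hypothesis $K\equiv 0$ into a rigid condition on one of the two generating curves. In these coordinates $f_u=\varphi'(u)$, $f_v=\psi'(v)$, $f_{uu}=\varphi''(u)$, $f_{uv}=0$, $f_{vv}=\psi''(v)$, so for any transverse normal $\nu$ one has
\[
E=G=0,\quad F=\langle\varphi',\psi'\rangle\neq 0,\quad L=\langle\varphi''(u),\nu\rangle,\quad M=0,\quad N=\langle\psi''(v),\nu\rangle,
\]
and hence $K=(LN-M^2)/(EG-F^2) = -L\,N/\langle\varphi',\psi'\rangle^2$. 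The assumption $K\equiv 0$ therefore forces the product $L\cdot N$ to vanish identically on the parameter domain.

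Next I would observe that the closed sets $\{L=0\}$ and $\{N=0\}$ cover the domain, so by connectedness at least one of them, say $\{L=0\}$, has nonempty interior (the other case being symmetric). On such an open subset, $\varphi''(u)$ is orthogonal to $\nu(u,v)$, so it lies in the tangent plane $\mathrm{span}\{\varphi'(u),\psi'(v)\}$; writing $\varphi''(u)=a\,\varphi'(u)+b\,\psi'(v)$ and using $\langle\varphi'',\varphi'\rangle=0$ (obtained by differentiating the null condition $\langle\varphi',\varphi'\rangle\equiv 0$), the pairing with $\varphi'$ gives $b\,\langle\varphi',\psi'\rangle=0$, whence $b\equiv 0$. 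So $\varphi''(u)=a(u)\,\varphi'(u)$, where $a$ depends only on $u$ because the left-hand side does. Integrating this ODE shows that $\varphi'(u)$ is proportional to a single lightlike vector $V$, and after a reparametrization of $u$ one may assume $\varphi(u)=uV$; the surface is then the cylinder $f(u,v)=uV+\psi(v)$ with lightlike director $V$ and lightlike base curve $\psi$.

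For the global conclusion I would invoke real-analyticity of timelike minimal surfaces coming from the paraholomorphic Weierstrass data in Fact \ref{fact:Weierstrass1}: the condition $\varphi'(u)\parallel V$ is analytic in $u$, so once it holds on an open interval it propagates to the whole connected parameter domain by the identity principle. The degenerate case in which both $L$ and $N$ vanish everywhere yields a null plane, which is itself a trivial cylinder with lightlike base and director, so the statement remains valid in that borderline situation. The main obstacle is precisely this globalization step, since a priori the two closed sets $\{L=0\}$ and $\{N=0\}$ could partition the parameter domain in complicated ways and one must rule out the possibility that the local cylinder presentations obtained on different pieces are incompatible; the analytic propagation argument is the cleanest route to resolving this.
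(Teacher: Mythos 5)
Your local analysis is correct and matches the paper's: the decomposition $f(u,v)=\varphi(u)+\psi(v)$, the computation $K=-LN/F^{2}$, and the step showing that $L\equiv 0$ on an open set forces $\varphi''\parallel\varphi'$ there (hence a constant lightlike direction) are all exactly what is needed. The genuine gap is the globalization step, and the tool you propose to close it does not exist in this setting. Timelike minimal surfaces are \emph{not} real-analytic: in null coordinates they solve the wave equation $f_{uv}=0$, whose solutions are arbitrary sums $\varphi(u)+\psi(v)$ of merely $C^{2}$ (or $C^{\infty}$) curves, and paraholomorphic Weierstrass data from Fact \ref{fact:Weierstrass1} is likewise built from two arbitrary one-variable functions with no identity principle. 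The failure of analyticity is in fact a central theme of this paper --- it is precisely why flat points of timelike minimal surfaces need not be isolated and why the counterexample in Proposition \ref{prop:counter} exists --- so an argument that ``the condition $\varphi'\parallel V$ propagates by the identity principle'' cannot be used.

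The correct way to close the gap, and the one step of the paper's proof you are missing, is the Codazzi equation: for a timelike minimal surface the Hopf coefficients satisfy $\partial_v\langle f_{uu},\nu\rangle=0$ and $\partial_u\langle f_{vv},\nu\rangle=0$, so your $L$ is a function of $u$ alone and $N$ a function of $v$ alone (this is the statement that the Hopf differentials $Q\,du^{2}$ and $R\,dv^{2}$ are ``paraholomorphic''; the paper cites \cite{FI}). Once this is known, $L(u)N(v)\equiv 0$ on a product domain immediately forces $L\equiv 0$ or $N\equiv 0$ globally --- if $N(v_0)\neq 0$ for some $v_0$ then $L(u)=0$ for every $u$ --- and no covering-by-closed-sets argument or propagation step is needed at all. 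Two minor points: in the totally geodesic case $L\equiv N\equiv 0$ the surface is a \emph{timelike} plane, not a null plane (a null plane is degenerate and cannot carry a timelike surface); the conclusion still holds since a timelike plane is spanned by two independent lightlike directions. Also note that without Codazzi your Baire-type argument only yields the cylinder structure on an open sub-rectangle, so the gap is not cosmetic.
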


\begin{proof}
We consider a local parametrization \eqref{null curves decomposition} on a domain $D$ of the form $(u_0-\varepsilon, u_0+\varepsilon)\times (v_0-\varepsilon, v_0+\varepsilon)$. On $D$, the first and the second fundamental forms can be written as follows.
\begin{center}
$\mathrm{I}=2\Lambda dudv$\quad and\quad $\mathrm{II}=Qdu^2+Rdv^2$.
\end{center}
Therefore, the shape operator $S$ and the Gaussian curvature $K$ are 
\begin{equation}\label{condition1}
\hspace{0.1cm} S=\mathrm{I}^{-1}\mathrm{II}=
\left(\begin{array}{cc} 0 & \frac{R}{\Lambda} \\ \frac{Q}{\Lambda} & 0 \\ \end{array} \right),\quad K=-\frac{QR}{\Lambda^2}.
\end{equation}
Here, we remark that the Codazzi equation shows that the coefficients $Q=Q(u)$ and $R=R(v)$ are functions of one variable, see \cite{FI}. 

When $Q$ and $R$ vanishes identically on $D$, the surface $f$ is totally umbilic, and hence it is a part of a timelike plane. 
When $Q \not\equiv 0$ or $R \not\equiv 0$ holds on $D$, without loss of generality, we may assume that $Q(u_0) \neq 0$. Since $R$ is a function of one variable and $QR\equiv 0$, we obtain $R \equiv 0$ on $D$. This means that $\psi' /\!/ \psi''$ and hence we can take a parameter $\tilde{v}=\tilde{v}(v)$ and a lightlike vector $\psi_0\in \mathbb{R}^3_1$ such that $\psi = \tilde{v}\psi_0$, which completes the proof. 
\end{proof}

\begin{remark}
The quadratic differentials $Qdu^2=\langle f_{uu}, \nu \rangle du^2$ and $Rdv^2=\langle f_{vv},\nu\rangle du^2$ are called the {\it Hopf differentials} of $f$, where $\nu$ is a unit normal vector field of $f$. Fujioka and Inoguchi \cite{FI} showed that any umbilic free timelike (not necessary minimal) surface in a Lorentzian space form satisfying the condition $Q\neq 0, R\equiv 0$ or $Q\equiv 0, R\neq 0$ must be a ruled surface whose base curve and director curve are lightlike, which is called a {\it B-scroll}. For B-scrolls in $\mathbb{R}^3_1$, see also \cite{CDM,Clelland,Graves,IT,McNertney}.

\end{remark}

By avoiding flat points, the following rigidity theorem for isometric and anti-isometric classes as in Fact \ref{thm:Schwarz} holds.

\begin{theorem}\label{thm:timelikeSchwarz}
Let $f_1$ and $f_2$ be simply connected timelike minimal surfaces in $\mathbb{R}^3_1$ without flat points and singular points. Then the following statements hold.
\begin{itemize}
\item[(1)] If $f_1$ and $f_2$ are isometric, then $f_1$ is congruent to a surface in the associated family $\{(f_2)_\theta\}_{\theta \in \mathbb{R}}$ of $f_2$.
\item[(2)] If $f_1$ and $f_2$ are anti-isometric, then $f_1$ is congruent to a surface in the associated family $\{(\hat{f_2})_\theta\}_{\theta \in \mathbb{R}}$ of the conjugate surface $\hat{f_2}$.
\end{itemize}
\end{theorem}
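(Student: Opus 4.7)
My plan is to adapt the classical Schwarz argument (Fact~\ref{thm:Schwarz}) to the paracomplex setting, and to dispose of statement~(2) first by reducing it to~(1). Since the conjugate surface $\hat{f_2}$ satisfies $\mathrm{I}_{\hat{f_2}}=-\mathrm{I}_{f_2}$ by Remark~\ref{rema:asso_family}, an anti-isometry from $f_1$ to $f_2$ is the same thing as an isometry from $f_1$ to $\hat{f_2}$; conjugation merely flips the sign of $K$, so $\hat{f_2}$ still has no flat points and meets the hypotheses of~(1). Thus, once~(1) is known, statement~(2) follows by applying~(1) to $f_1$ and $\hat{f_2}$.

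For~(1), I would place both surfaces in the Weierstrass form~\eqref{eq:pW} on a common paracomplex-isothermal chart. A Lorentz-surface isometry is paraholomorphic or paraantiholomorphic in such coordinates; the paraantiholomorphic case is absorbed into a preliminary reflection of $f_2$ through an element of $O(1,2)$, using the explicit form of~\eqref{eq:pW}. After this reduction we may assume the isometry acts as the identity on the chart, so by~\eqref{eqn:wConformal},
\begin{equation*}
(1-|h_1|^2)^2\,|\eta_1|^2 \;=\; (1-|h_2|^2)^2\,|\eta_2|^2.
\end{equation*}
The Theorema Egregium gives $K_1=K_2$, and a direct computation from~\eqref{eq:pW} expresses $K$ in terms of the Weierstrass data; eliminating $|\eta|^2$ between the curvature and metric equalities yields
\begin{equation*}
\frac{|h_{1,z}|^2}{(1-|h_1|^2)^2} \;=\; \frac{|h_{2,z}|^2}{(1-|h_2|^2)^2},
\end{equation*}
which is exactly the statement that $h_1$ and $h_2$ pull back the same metric from the pseudosphere $S^2_1$ through the stereographic projection $\mathcal{P}$ of Remark~\ref{remark:Gaussmap}.

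The no-flat-points hypothesis ensures $h_{i,z}\neq 0$ and $|h_i|^2\neq 1$, so each $h_i$ is a paraholomorphic local diffeomorphism into $\mathbb{C}'\setminus H^1$. From here I would upgrade the coincidence of pullback metrics to a relation $h_1=T\circ h_2$, where $T$ is a paraholomorphic isometry of $S^2_1$, and identify $T$ with the restriction to $S^2_1$ of an element $\widetilde{O}\in O(1,2)$. Replacing $f_2$ by $\widetilde{O}f_2$ (a congruence) then reduces us to $h_1=h_2=:h$; the metric equality collapses to $|\eta_1|^2=|\eta_2|^2$, so the paraholomorphic function $\eta_1/\eta_2$ on the connected surface $M$ has constantly unit squared modulus. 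A short computation with the Cauchy--Riemann equations~\eqref{eqn:cauchyRiemann} shows that a paraholomorphic function of constantly nonzero squared modulus must be locally, hence globally, constant; the constant lies on the hyperbola $|z|^2=1$ and so equals $\pm\phoro{e}^{j\theta}$ for some $\theta\in\mathbb{R}$, the sign being absorbed by the congruence $-I_3\in O(1,2)$ (which flips $\eta$ in~\eqref{eq:pW}). Integrating~\eqref{eq:pW} then exhibits $f_1$ as $(f_2)_\theta$ up to a translation, completing~(1).

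The main obstacle I anticipate is the ``Möbius'' step: upgrading the coincidence of pullback metrics to a global relation $h_1=T\circ h_2$ with $T$ coming from $O(1,2)$. In the Euclidean setting this step is streamlined by the connectedness and compactness of $S^2$ and the transitive action of $SO(3)$; here $S^2_1$ is noncompact and $\mathbb{C}'\setminus H^1$ has three connected components separated by $H^1$, so $h_1(M)$ and $h_2(M)$ may a priori sit in different components. I expect to handle this by classifying the paraholomorphic isometries of $S^2_1$ as the paracomplex analogue of the $\mathrm{PSU}(2)$ Möbius group, using the transitive $O(1,2)$-action on suitable pointed frames of $S^2_1$, and applying an additional reflection from $O(1,2)$ when necessary to bring the two Gauss images into a common component before the local identification $h_1=T\circ h_2$ can be propagated across $M$.
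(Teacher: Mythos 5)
Your proposal has the same overall skeleton as the paper's proof. The first step (from $H=0$ deduce $\mathrm{III}_{f_i}=K_i\,\mathrm{I}_{f_i}$, hence that the two Gauss maps pull back the same metric from $S^2_1$ --- your displayed identity in the stereographic chart is exactly $\mathrm{III}_{f_1}=\mathrm{III}_{f_2}$) and the endgame (after a congruence $h_1=h_2$, so $|\eta_1/\eta_2|^2=\pm1$ is constant and nonzero, Lemma \ref{lemma:constant} applies, and the constant lies on one of the hyperbolae $|z|^2=\pm1$, giving $\pm\phoro{e}^{j\theta}$ or $\pm j\phoro{e}^{j\theta}$) are precisely the paper's, as is the reduction of statement (2) to the conjugate surface.

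Where you genuinely diverge is the step you yourself flag as the main obstacle: upgrading equality of pullback metrics to $h_1=T\circ h_2$ with $T$ induced by $O(1,2)$. Your plan --- classify the paraholomorphic M\"obius group of $S^2_1$ and propagate a locally defined isometry across $M$, with extra care for the components of $\mathbb{C}'\setminus H^1$ --- can be made to work (a local isometry of a Lorentzian space form is determined by its $1$-jet at one point, $O(1,2)$ acts transitively on orthonormal frames of $S^2_1$, and the component issue lives only in the chart, not on $S^2_1$ itself), but as written it is a sketch rather than a proof, and it is the only unclosed step in your argument. The paper closes it with a much shorter device worth noting: view $\nu_1,\nu_2\colon M\to S^2_1\subset\mathbb{R}^3_1$ as surfaces in $\mathbb{R}^3_1$; since the position vector of the unit pseudosphere is its own unit normal, the Weingarten equation makes $\mathrm{II}_{\nu_i}$ determined by $\mathrm{I}_{\nu_i}$, so $\mathrm{I}_{\nu_1}=\mathrm{I}_{\nu_2}$ already gives $\mathrm{II}_{\nu_1}=\mathrm{II}_{\nu_2}$ and the fundamental theorem of surface theory yields $\nu_1=O\nu_2$ outright, with no M\"obius classification and no component bookkeeping. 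If you substitute that argument, the rest of your proof goes through; one small correction along the way: the no-flat-point hypothesis gives $|h_{i,z}|^2\neq0$ rather than merely $h_{i,z}\neq0$, and it is the former (nonvanishing squared modulus, i.e.\ $h_{i,z}$ not a zero divisor in $\mathbb{C}'$) that you need for $h_i$ to be a local diffeomorphism.
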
 

To prove the theorem, it should be noted that by making an additional assumption, the following basic property that holds for holomorphic functions also holds for paraholomorphic functions. 
\begin{lemma}\label{lemma:constant}
If a paraholomorphic function $\phoro{f}$ satisfies $|\phoro{f}|^2=c$ for a non-zero constant $c$, then $\phoro{f}$ is a constant function.
\end{lemma}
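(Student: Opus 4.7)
The plan is to reduce the statement to real partial differential equations via the paracomplex Cauchy--Riemann equations and exploit the assumption $c \neq 0$ to eliminate the zero divisors of $\mathbb{C}'$. Write $\phoro{f} = u + jv$ where $u, v$ are real-valued functions of $(x,y)$ with $z = x + jy$. The paraholomorphy condition $\partial_{\bar z}\phoro{f} = \tfrac{1}{2}(\partial_x - j\partial_y)(u + jv) = 0$ expands to $(u_x - v_y) + j(v_x - u_y) = 0$, yielding the real system $u_x = v_y$ and $u_y = v_x$.

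Next I would differentiate the identity $|\phoro{f}|^2 = u^2 - v^2 = c$ with respect to $x$ and $y$, obtaining $u u_x = v v_x$ and $u u_y = v v_y$. Substituting the Cauchy--Riemann equations $v_x = u_y$ and $v_y = u_x$ turns these into
\begin{equation*}
u\, u_x = v\, u_y, \qquad u\, u_y = v\, u_x.
\end{equation*}
Multiplying the first by $u$ and the second by $v$ (or chaining the substitutions) gives $u^2 u_x = uv\, u_y = v^2 u_x$, and similarly $u^2 u_y = v^2 u_y$. Hence $(u^2 - v^2) u_x = 0$ and $(u^2 - v^2) u_y = 0$, i.e., $c\, u_x = c\, u_y = 0$.

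Since $c \neq 0$, we conclude $u_x \equiv u_y \equiv 0$, so $u$ is constant. The Cauchy--Riemann equations then force $v_x = v_y = 0$, so $v$ is also constant, proving that $\phoro{f}$ is constant.

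The only genuine subtlety is the role of the hypothesis $c \neq 0$: in $\mathbb{C}'$ there are zero divisors (e.g.\ $1 + j$ has $|1+j|^2 = 0$), and without this assumption non-constant paraholomorphic functions with $|\phoro{f}|^2 \equiv 0$ do exist (such as $\phoro{f}(z) = (1+j)\phoro{g}(z)$ for any paraholomorphic $\phoro{g}$). So the assumption $c\neq 0$ is exactly what allows us to divide out and force the derivatives to vanish; this is where the argument differs from the trivial holomorphic analogue.
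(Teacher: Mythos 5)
Your argument is correct, and it is exactly the computation the paper has in mind: the paper dismisses this lemma in one line as ``a direct consequence of the Cauchy--Riemann type equations,'' and your write-up simply carries out that consequence in detail, correctly isolating $c\neq 0$ as the hypothesis that kills the derivatives.
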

This lemma is a direct consequence of the Cauchy-Riemann type equations, see \eqref{eqn:cauchyRiemann}. In the case where $c=0$, in contrast to the case of holomorphic functions, there exists a nonconstant paraholomorphic function $\phoro{f}$ satisfying $| \phoro{f}|^2=0$ such as $\phoro{f}(z)=z(1+j)$.

\begin{proof}[{\bf Proof of Theorem $\ref{thm:timelikeSchwarz}$}]
As well as surfaces in $\mathbb{R}^3$, the first, second and third fundamental forms $\mathrm{I}_f=\langle df, df \rangle$, $\mathrm{II}_f=-\langle df, d\nu \rangle$ and $\mathrm{III}_f=\langle d\nu, d\nu \rangle$ of a timelike surface $f$ satisfy the relation
\[
-K\mathrm{I}_f-2H\mathrm{II}_f+\mathrm{III}_f=0,
\]
where $K$ and $H$ are the Gaussian curvature and the mean curvature of $f$. The condition $H=0$ implies $\mathrm{III}_f=K\mathrm{I}_f$. When $f_1$ and $f_2$ are isometric (resp. anti-isometric) $\mathrm{I}_{f_1}=\mathrm{I}_{f_2}$ and $K_{f_1}=K_{f_2}$ (resp.  $\mathrm{I}_{f_1}=-\mathrm{I}_{f_2}$ and $K_{f_1}=-K_{f_2}$) hold, and hence third fundamental forms of $f_1$ and $f_2$ satisfy $\mathrm{III}_{f_1}=\mathrm{III}_{f_2}$ in both cases (1) and (2). By the definition of the third fundamental form and the assumption $K\neq 0$, it means that unit normal vector fields $\nu_1$ and $\nu_2$ of the surfaces $f_1$ and $f_2$ have the same non-degenerate first fundamental forms $\mathrm{I}_{\nu_1}=\mathrm{I}_{\nu_2}$.

On the other hand, the relation $\mathrm{I}_{\nu_1}=\mathrm{I}_{\nu_2}$ and the Weingarten equation for the surfaces $\nu_1$ and $\nu_2$ imply $\mathrm{II}_{\nu_1}=\mathrm{II}_{\nu_2}$. Therefore, by the fundamental theorem of surface theory, we conclude that $\nu_1=\nu_2$ after an isometry of $\mathbb{R}^3_1$. 

Let $(h_i,  \eta_i)$ be the Weierstrass data of the surface $f_i$ $(i=1,2)$. The relation $\nu_1=\nu_2$ shows that $h_1 = h_2$ by Remark \ref{remark:Gaussmap}. 
Since we are considering regular surfaces, the equation \eqref{eqn:wConformal} implies $|\eta_1|^2=|\eta_2 |^2 \neq 0$ when $f_1$ and $f_2$ are isometric and $|\eta_1 |^2=-|\eta_2|^2 \neq 0$ when $f_1$ and $f_2$ are anti-isometric. 
Then we obtain the paraholomorphic function 
$\hat{\eta_2}/\hat{\eta_1}$ satisfying $| \hat{\eta_2}/\hat{\eta_1}|^2=\pm 1$, where $\eta_i=\hat{\eta_i}dz^2$ $(i=1,2)$. Finally, Lemma \ref{lemma:constant} shows that $\hat{\eta_2}/\hat{\eta_1}$ is a constant function with $|\hat{\eta_2}/\hat{\eta_1}|^2=\pm 1$. Hence, there exists a real number $\theta$ such that $ \hat{\eta_2}/\hat{\eta_1} = \pm \phoro{e}^{j\theta}$ when $f_1$ and $f_2$ are isometric or $\hat{\eta_2}/\hat{\eta_1} = \pm j\phoro{e}^{j\theta}$ when $f_1$ and $f_2$ are anti-isometric.

In summary, we prove that $(h_2, \eta_2)=(h_1, \pm \phoro{e}^{j\theta}\eta_1)$ or $(h_2, \eta_2)=(h_1, \pm j\phoro{e}^{j\theta}\eta_1)$ after an isometry of $\mathbb{R}^3_1$, giving the desired result.
 
\end{proof}

\section{Deformation and symmetry}\label{sec:Goursat}

In this section, we show how symmetry of a timelike minimal surface $f$ preserved under the isometric deformation $\{f_\lambda \}_\lambda$ and the anti-isometric deformation $\{\hat{f}_\lambda \}_\lambda$. We also show that symmetry of timelike minimal surfaces is also propagated while changing its shape under more general transformations and deformations. 

We denote the paracomplex conformal group with signature $(-,+,+)$  by $\mathrm{CO}(1,2; \mathbb{C}')$, that is, 
\[
\mathrm{CO}(1,2; \mathbb{C}') = \{ A\in \mathrm{M}(3,\mathbb{C}') \mid  {}^t\! AI_{1,2}A=cI_{1,2},\ |c|^2\neq 0 \},
\]
where ${}^t\! A$ is the transposed matrix of $A$ and $I_{1,2}=\text{diag}(-1,1,1)$.

Let $f=\mathrm{Re}\int {\omega}$ be a conformal timelike minimal surface on a simply connected Lorentz surface $M$ with a paraholomorphic 1-form $\omega={}^t\!(\omega_1,\omega_2,\omega_3)$. For a matrix $A\in \mathrm{CO}(1,2; \mathbb{C}')$, the surface 
\begin{equation}\label{eq: Goursat}
f_A(p) := \mathrm{Re}\left(A\int_{p_0}^p{\omega}\right) + f_A(p_0)
\end{equation}
also gives a conformal timelike minimal surface because $A$ satisfies the condition $\langle A\omega, A\omega \rangle =c\langle \omega, \omega \rangle =0$. Based on the work \cite{G} by Goursat, we call it the {\it Goursat transformation} of $f$. 

First, we recall the following Lemma (see \cite[Lemma 5.3]{Meeks} for the Riemannian case).
 
\begin{lemma}\label{lemma:gomega}
Let $f\colon M \to \mathbb{R}^3_1$ be a simply connected timelike minimal surface represented by $f=\mathrm{Re}\left(\int{\omega}\right)$ with a paraholomorphic 1-form $\omega={}^t\!(\omega_1,\omega_2,\omega_3)$. Then
\begin{itemize}
\item[(1)] If $g\in S^\circ_f(M)$ with the linear part $O\in O(1,2)$, then $g^*\omega = O\omega$,
\item[(2)] If $g\in S^r_f(M)$ with the linear part $O\in O(1,2)$, then $g^*\omega = O\bar{\omega}$.
\end{itemize}
\end{lemma}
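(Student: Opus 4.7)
The strategy is the paracomplex analogue of the argument in \cite{Meeks}: differentiate the defining symmetry relation $f\circ g=Of+t$ to reduce the problem to a statement about the paraholomorphic $1$-form $\omega$, and then exploit a type decomposition of $1$-forms on the Lorentz surface $M$.

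First, applying the exterior derivative to $f\circ g=Of+t$ gives $g^*(df)=O\,df$, so that $df=\operatorname{Re}\omega$ yields
\[
\operatorname{Re}(g^*\omega)=O\operatorname{Re}(\omega),
\]
equivalently $(g^*\omega-O\omega)+\overline{(g^*\omega-O\omega)}=0$. On its own this identity is not enough to conclude $g^*\omega=O\omega$; I still need to know that $g^*\omega$ lies in the same ``type'' class as $O\omega$ (or $O\bar\omega$).

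That is where the hypothesis that $g$ is an isometry of the Lorentz surface $M$ enters. An isometry either preserves or reverses the induced paracomplex structure on $M$ according to whether it is orientation preserving or orientation reversing. Hence, in case (1), $g$ is paraholomorphic and, writing $\omega=\phi\,dz$ in a local paracomplex coordinate, $g^*\omega=(\phi\circ g)\,d(g(z))$ is again of type $(1,0)$ with paraholomorphic coefficient, matching the type of $O\omega$; in case (2), $g$ is para-antiholomorphic and $g^*\omega$ is of type $(0,1)$ with para-antiholomorphic coefficient, matching the type of $O\bar\omega$. Accordingly I would set $\alpha:=g^*\omega-O\omega$ in case (1) and $\beta:=g^*\omega-O\bar\omega$ in case (2); in both cases the resulting form is of pure paracomplex type and has vanishing real part.

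The closing step is a small vanishing lemma: a $(1,0)$-form $\alpha=\phi\,dz$ with $\operatorname{Re}\alpha\equiv 0$ is identically zero (and symmetrically for $(0,1)$-forms). Writing $\phi=\phi_1+j\phi_2$ and $dz=dx+j\,dy$, one computes $\operatorname{Re}(\phi\,dz)=\phi_1\,dx+\phi_2\,dy$, and linear independence of $dx,dy$ forces $\phi_1=\phi_2=0$. Applied componentwise to $\alpha$ (or by conjugation to $\beta$), this gives the two claimed identities. The part that deserves the most care, rather than any real difficulty, is the identification of isometries of $M$ with paraholomorphic/para-antiholomorphic maps: in the Riemannian setting of \cite[Lemma 5.3]{Meeks} this is classical, but in the Lorentzian setting one has to check that the orientation preserving/reversing dichotomy on $S_f(M)$ really corresponds to the preservation/reversal of the paracomplex structure, with no intermediate ``lightlike'' possibility. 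Once that is in place, the type-matching argument finishes everything.
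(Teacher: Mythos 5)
Your proof is correct and follows essentially the same route as the paper's: both differentiate $f\circ g=Of+t$ to obtain $g^*df=O\,df$ and then invoke the fact that an orientation preserving (resp.\ reversing) isometry of the Lorentz surface respects (resp.\ conjugates) the paracomplex structure. The only cosmetic difference is that the paper packages this second input as the identity $\omega=df+j*df$ together with $g^*(*df)=\pm *g^*(df)$, whereas you package it as the type decomposition of $1$-forms plus the (correct) vanishing lemma for a pure-type form with zero real part; the dichotomy you rightly flag --- that an isometry of a connected Lorentz surface either preserves or swaps the two null foliations, with the sign tied to orientation --- is exactly the content of the paper's Hodge-star step.
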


\begin{proof}
By the relation $f=\mathrm{Re}\left(\int{\omega}\right)$, we first remark that $\omega = df +j*df$, where $*$ is the Lorentzian Hodge star operator represented by the relation
\[
*dx = dy,\quad *dy =dx\quad
\]
for each paracomplex coordinate $z=x+jy$. By the assumption $g\in S_f(M)$, there exists an isometry $\tilde{g}$ of $\mathbb{R}^3_1$ such that $\tilde{g}\circ f = f\circ g$. Hence, the relation
\[
g^*df=g^*(f^*dx)=(f\circ g)^*dx=f^*(\tilde{g}^*dx) =f^*Odx=Of^*dx=Odf
\]
holds, where $dx={}^t\!(dx_1,dx_2, dx_3)$. 
Therefore, we obtain the desired relations (1) and (2) by using the fact that $g^*(*df)=*g^*(df)$ if $g$ is orientation preserving and $g^*(*df)=-*g^*(df)$ if $g$ is orientation reversing.
\end{proof}

\begin{remark}[For the case of surfaces with singularities]
We remark that the Hodge star operator on $1$-forms on a Lorentz surface is defined by the formula
\[
*dz=jdz,\quad *d\bar{z}=jd\bar{z}.
\]
This means that it depends only on the paracomplex structure and not on the Lorentzian metric. This fact implies that the assertions of Lemma \ref{lemma:gomega} and the results of this section hold for timelike minimal surfaces with singularities exactly the same way. Here, a singularity means a point on which the induced metric of the considered surface degenerates. For details on timelike minimal surfaces with singularities, see \cite{A,KKSY}.
\end{remark}

Here, we give a proof of Theorem \ref{thm:symmetry_Goursat_Introduction} which explains the relationships between the space groups $S_{f_A}(M)$ and $S_f(M)$.
 
 \begin{proof}[{\bf Proof of Theorem \ref{thm:symmetry_Goursat_Introduction}}]
 We prove only the case (1). Suppose $f_A=\mathrm{Re}\left(A\int{\omega}\right)$ and $g\in S_f^\circ(M)$ has linear part $O$. To prove the sufficiency let us assume that $AO=\widetilde{O}A$.
 
  First, we prove that $g\in S_f^\circ(M)$ is an isometry of $f_A(M)$. The first fundamental form $\mathrm{I}_{f_A}$ of $f_A$ is written by 
 \[
 \mathrm{I}_{f_A}= 4\langle (f_A)_w, \overline{(f_A)_w} \rangle dwd\bar{w} = \langle A\omega, \overline{A\omega} \rangle,
 \]
 and hence Lemma \ref{lemma:gomega} and the assumption $AO=\widetilde{O}A$ imply that
 \[
 g^* \mathrm{I}_{f_A} =  \langle Ag^*\omega, \overline{A g^*\omega} \rangle 
= \langle AO\omega, \overline{A O\omega} \rangle 
= \langle \widetilde{O}A\omega, \overline{\widetilde{O}A\omega} \rangle 
= \langle A\omega, \overline{A\omega} \rangle =\mathrm{I}_{f_A},
 \]
 which means that $g$ is also an isometry of $f_A(M)$.
 
 Next, we check $g$ also induces a symmetry of $f_A(M)$.
  \begin{align*}
f_A(g(p)) &= \mathrm{Re}\left(A\int_{p_0}^{g(p)}{\omega}\right) + f_A(p_0) \\
&= \mathrm{Re}\left(A\int_{p_0}^{g(p_0)}{\omega}\right) + \mathrm{Re}\left(A\int_{g(p_0)}^{g(p)}{\omega}\right)+ f_A(p_0) \\
&= \mathrm{Re}\left(A\int_{p_0}^{p}{g^*\omega}\right)+ f_A(g(p_0)) \\
&= \mathrm{Re}\left(A\int_{p_0}^{p}{O\omega}\right)+ f_A(g(p_0)) \\
&= \mathrm{Re}\left(AO\int_{p_0}^{p}{\omega}\right)+ f_A(g(p_0)) \\
&= \mathrm{Re}\left(\widetilde{O}A\int_{p_0}^{p}{\omega}\right)+ f_A(g(p_0)) \\
&= \widetilde{O}f_A(p)- \widetilde{O}f_A(p_0)+ f_A(g(p_0)). 
 \end{align*}
Conversely, if $g\in S_{f_A}(M)$ and its linear part is a matrix $\widetilde{O}\in \mathrm{O}(1,2)$. Then there exists a vector $\tilde{t}\in \mathbb{R}^3_1$ such that $f_A\circ g = \widetilde{O}f_A + \tilde{t}$. Based on the relation $g^*(A\omega)=AO\omega$, by taking the derivative of this equation, we have $AO=\widetilde{O}A$.

 This proves (1) and the case (2) is proved similarly by using the assumption $AO=\widetilde{O}\bar{A}$ and the relation $g^*\omega =O\bar{\omega}$ in Lemma \ref{lemma:gomega}.
 \end{proof}
 
 Since the identity matrix $I_3$ commutes with arbitrary matrix, we have the following result regarding the conservation of translation symmetry.
 
 \begin{corollary}\label{cor:translation}
 Under the same assumptions as in Theorem \ref{thm:symmetry_Goursat_Introduction}, suppose an orientation preserving isometry $g\in S^\circ_f(M)$ gives a translation symmetry of a surface $f$, then $g\in S^\circ_{f_A}(M)$ also gives a translation symmetry of arbitrary Goursat transformation $f_A$ for $A\in \mathrm{CO}(1,2; \mathbb{C}')$ whenever the translation vector does not vanish.
 \end{corollary}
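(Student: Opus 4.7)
The plan is to reduce the statement directly to the sufficiency direction of Theorem~\ref{thm:symmetry_Goursat_Introduction}~(1), by characterizing a translation symmetry as precisely a space group element whose linear part is the identity. First I would observe that, by hypothesis, there is some $t \in \mathbb{R}^3_1$ with $f \circ g = f + t$, which means that the linear part of $g \in S^\circ_f(M)$ is $O = I_3$.

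Next I would make the trivial choice $\widetilde{O} = I_3 \in \mathrm{O}(1,2)$. Then the commutation relation $AO = A = \widetilde{O}A$ holds automatically because $I_3$ commutes with every matrix in $\mathrm{M}(3,\mathbb{C}')$. Invoking the sufficiency part of Theorem~\ref{thm:symmetry_Goursat_Introduction}~(1) immediately yields $g \in S^\circ_{f_A}(M)$ with linear part $\widetilde{O} = I_3$. By Definition~\ref{def:space group}, this means exactly that there exists $\tilde{t} \in \mathbb{R}^3_1$ with $f_A \circ g = f_A + \tilde{t}$, so whenever $\tilde{t} \neq 0$ the element $g$ acts as a genuine translation symmetry of $f_A$.

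No substantive obstacle arises; the corollary is essentially a direct specialization of Theorem~\ref{thm:symmetry_Goursat_Introduction}~(1) to the case $O = I_3$. The clause ``whenever the translation vector does not vanish'' merely records the possibility that the induced vector $\tilde{t}$ degenerates to zero for particular choices of $A$, as illustrated in Figure~\ref{Fig:Intro}, where one of the two translation symmetries of the initial surface collapses under a Goursat transformation. In that degenerate case $g$ still lies in $S^\circ_{f_A}(M)$, but acts trivially on $f_A(M)$ rather than producing a new translation.
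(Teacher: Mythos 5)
Your proposal is correct and follows essentially the same route as the paper, which justifies the corollary with the single observation that the identity matrix commutes with every $A\in \mathrm{CO}(1,2;\mathbb{C}')$, so that Theorem~\ref{thm:symmetry_Goursat_Introduction}~(1) applies with $O=\widetilde{O}=I_3$. Your additional remarks on the degenerate case $\tilde{t}=0$ accurately explain the clause about the translation vector not vanishing.
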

 By using this corollary, we can produce many periodic timelike minimal surfaces as Example \ref{example:Bonnet}.
 
 \begin{remark}
In \cite[Corollary 5.2]{Meeks}, Meeks pointed out a necessary and sufficient condition for the conjugate minimal surface in the Euclidean space to have a translation symmetry. 
Moreover, Leschke and Moriya \cite{LM} revealed similar results on the conservation of translation symmetry of simple factor dressing of minimal surfaces in $\mathbb{R}^3$ and $\mathbb{R}^4$, which is also a special kind of Goursat transformations. See Theorem 6.2 and Corollary 6.8 in \cite{LM}.
\end{remark}

Goursat transformations of the form \eqref{eq: Goursat} include various important transformations and deformations of timelike minimal surfaces. 
 From now on, we give some applications of Theorem \ref{thm:symmetry_Goursat_Introduction} for specific deformations.

\subsection{Associated family and conjugation, revisited}

The isometric deformation $\{f_\theta\}_{\theta \in \mathbb{R}}$ and the anti-isometric deformation $\{\hat{f}_\theta\}_{\theta \in \mathbb{R}}$ in Definition \ref{def:associated} are also corresponding to the Goursat transformations for the matrices $\phoro{e}^{j\theta}I_3$ and $j\phoro{e}^{j\theta}I_3$, respectively. Therefore, we obtain the following result, which is a timelike counter part of the result for minimal surfaces by Meeks \cite[Theorem 5.5]{Meeks}, describing how space groups behave with respect to the above transformations.

\begin{corollary}\label{cor:Spacegroup}
Let $f\colon M \to \mathbb{R}^{3}_1$ be a simply connected timelike minimal surface. Then 
\begin{itemize}
\item[(1)] $S_f^\circ(M) = S_{f_\theta}^\circ(M)=  S_{\hat{f}_\theta}^\circ(M)$ for all $\theta \in \mathbb{R}$. Moreover, the linear part of $g\in S_{f}^\circ(M)$ is preserved under the isometric deformation $\{f_\theta\}_{\theta\in \mathbb{R}}$ and the anti-isometric deformation $\{\hat{f}_\theta\}_{\theta\in \mathbb{R}}$.
\item[(2)] If $g\in S_f^r(M)$, then $g\not \in S_{f_\theta}^r(M)$ for all $\theta\neq 0$.
\item[(3)] If $g\in S_f^r(M)$, then $g\in S_{\hat{f}_\theta}^r(M)$ for some $\theta \in \mathbb{R}$ if and only if $\theta =0$. Moreover, if $g\in S_f^r(M)$ has the linear part $O$, then $g\in S_{\hat{f}}^r(M)$ has the linear part $-O$.
\end{itemize}
\end{corollary}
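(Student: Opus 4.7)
The plan is to realize both $f_\theta$ and $\hat f_\theta$ as Goursat transformations by scalar matrices and then apply Theorem \ref{thm:symmetry_Goursat_Introduction}. From Definition \ref{def:associated} and Fact \ref{fact:Weierstrass1}, the Weierstrass $1$-form $\omega$ of $f$ is multiplied by $\phoro{e}^{j\theta}$ in passing to $f_\theta$ and by $j\phoro{e}^{j\theta}$ in passing to $\hat f_\theta$; hence $f_\theta=f_{A_\theta}$ and $\hat f_\theta=f_{\hat A_\theta}$ where $A_\theta=\phoro{e}^{j\theta}I_3$ and $\hat A_\theta=j\phoro{e}^{j\theta}I_3$ lie in $\mathrm{CO}(1,2;\mathbb{C}')$. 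Since Goursat transformations compose as $(f_A)_B=f_{BA}$ and these scalar matrices are invertible (noting $j^{-1}=j$), every reverse inclusion that arises below will follow by applying Theorem \ref{thm:symmetry_Goursat_Introduction} to the deformed surface with the inverse scalar.

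For (1), fix $g\in S^\circ_f(M)$ with linear part $O$. Since $A_\theta$ and $\hat A_\theta$ are scalar multiples of $I_3$, they commute with $O$, so the condition $AO=\widetilde O A$ of Theorem \ref{thm:symmetry_Goursat_Introduction}(1) is satisfied by $\widetilde O=O\in \mathrm{O}(1,2)$. Hence $g\in S^\circ_{f_\theta}(M)\cap S^\circ_{\hat f_\theta}(M)$ with linear part preserved. The reverse inclusions follow by applying the same argument to $f_\theta$ (resp.\ $\hat f_\theta$) with $A_\theta^{-1}=\phoro{e}^{-j\theta}I_3$ (resp.\ $\hat A_\theta^{-1}=j\phoro{e}^{-j\theta}I_3$).

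For (2) and (3), let $g\in S^r_f(M)$ with linear part $O$. By Theorem \ref{thm:symmetry_Goursat_Introduction}(2) applied to $A=\alpha I_3$ with $|\alpha|^2\neq 0$, the condition $g\in S^r_{f_A}(M)$ is equivalent to the existence of $\widetilde O\in \mathrm{O}(1,2)$ satisfying $\widetilde O=(\alpha/\bar\alpha)\,O$. Since $|\alpha/\bar\alpha|^2=1$ and the only real numbers on the unit paracomplex hyperbola $|z|^2=1$ are $\pm 1$, requiring $\widetilde O$ to have real entries forces $\alpha/\bar\alpha\in\{\pm 1\}$; this in turn guarantees $\widetilde O=\pm O\in \mathrm{O}(1,2)$. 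For $\alpha=\phoro{e}^{j\theta}=\cosh\theta+j\sinh\theta$ (part (2)), the reality condition forces $\sinh\theta=0$, so no admissible $\widetilde O$ exists when $\theta\neq 0$. For $\alpha=j\phoro{e}^{j\theta}=\sinh\theta+j\cosh\theta$ (part (3)), the same condition again forces $\theta=0$; at $\theta=0$ one has $\alpha/\bar\alpha=j/(-j)=-1$, giving $\widetilde O=-O$ as claimed.

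The only subtle point is the orientation-reversing case, which hinges on the Lorentzian algebraic fact that the reals on the unit paracomplex hyperbola are exactly $\pm 1$; once Theorem \ref{thm:symmetry_Goursat_Introduction} is available, the whole statement reduces to this short computation in $\mathbb{C}'$.
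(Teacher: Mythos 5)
Your proposal is correct and follows essentially the same route as the paper: both realize $f_\theta$ and $\hat f_\theta$ as Goursat transformations by the scalar matrices $\phoro{e}^{j\theta}I_3$ and $j\phoro{e}^{j\theta}I_3$ and then apply Theorem \ref{thm:symmetry_Goursat_Introduction}, with the orientation-reversing cases reducing to the reality of $\pm\phoro{e}^{2j\theta}$, which forces $\theta=0$. The only (harmless) cosmetic differences are that you make the reverse inclusions in (1) explicit via inverse scalars and package the computation as the condition $\widetilde O=(\alpha/\bar\alpha)O$.
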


\begin{proof}
Let $g$ be a symmetry in $S_f(M)$ whose linear part is $O$.

 When $g$ is orientation preserving, the assertion $(1)$ follows from Theorem \ref{thm:symmetry_Goursat_Introduction} and the fact that $\phoro{e}^{j\theta}I_3$ and $j\phoro{e}^{j\theta}I_3$ commute with $O$.
 
 When $g$ is orientation reversing, Theorem \ref{thm:symmetry_Goursat_Introduction} asserts that $g\in S_{f_\theta}(M)$ holds if and only if there exists $\tilde{O}\in \mathrm{O}(1,2)$ such that 
\[
\phoro{e}^{j\theta}I_3O=\widetilde{O}\overline{\phoro{e}^{j\theta}I_3} \Leftrightarrow \phoro{e}^{2j\theta}O = \widetilde{O}.
\]
Therefore, we obtain $\theta =0$ proving the assertion (2). On the other hand, $g\in S_{\hat{f}_\theta}(M)$ holds if and only if there exists $\tilde{O}\in \mathrm{O}(1,2)$ such that 
\[
j\phoro{e}^{j\theta}I_3O=\widetilde{O}\overline{j\phoro{e}^{j\theta}I_3} \Leftrightarrow \phoro{e}^{2j\theta}O =-\widetilde{O}.
\]
Therefore, we obtain $\theta=0$ and $\widetilde{O}=-O$ proving the assertion (3).
\end{proof}

From now on, we focus on the conjugation. Taking the conjugation $\hat{f}$ of a timelike minimal surface $f$ corresponds to the Goursat transformation of $f$ with respect to the matrix
\begin{equation}\label{Conjugate_matrix}
J = \begin{pmatrix}
j & 0 & 0 \\
0 & j & 0 \\
0 & 0 & j
\end{pmatrix}
\in \mathrm{CO}(1,2; \mathbb{C}').
\end{equation}
Since $J$ commutes with arbitrary matrix, conjugation and arbitrary Goursat transformation commute up to a constant vector. More precisely, the following commutative diagram holds.
\[
  \begin{diagram}
    \node[2]{f}
    \arrow[3]{e,b,2}{\text{Goursat transformation}}
    \arrow[1]{s,l}{\text{conjugation}}
    \node[3]{f_A}
    \arrow[1]{s,r}{\text{conjugation}} \\
    \node[2]{\hspace{1cm}\hat{f}=f_J\hspace{1cm}} 
     \arrow[3]{e,t,1}{\hspace{1cm}\text{Goursat transformation}}
    \node[3]{\hspace{1cm} \hat{f}_A=f_{AJ}=f_{JA}}
  \end{diagram}
\]
where $A\in \mathrm{CO}(1,2; \mathbb{C}')$. 
Due to the relation $\bar{J}=-J$, Theorem \ref{thm:symmetry_Goursat_Introduction} and Corollary \ref{cor:Spacegroup}, we obtain the following relation between space groups of the surfaces $f, f_A, \hat{f}=f_J$ and $\hat{f}_A=f_{AJ}$.

 \begin{corollary}\label{cor:diagram}
  Let $f\colon M\to \mathbb{R}^3_1$ be a simply connected timelike minimal surface, $g$ be an orientation preserving (resp.~reversing) isometry of $f(M)$. Assume that $A$ be a matrix in $ \mathrm{CO}(1,2; \mathbb{C}')$ and $O, \widetilde{O}$ be matrices in $\mathrm{O}(1,2)$ such that $AO=\widetilde{O}A$ (resp.~ $AO=\widetilde{O}\bar{A}$). Then the following statements are equivalent.
   \begin{itemize}
    \item[(1)] $g\in S_f(M)$ with linear part $O$,
    \item[(2)] $g\in S_{\hat{f}}(M)$ with linear part $O$ (resp. $-O$),
    \item[(3)] $g\in S_{f_A}(M)$ with linear part $\widetilde{O}$,
    \item[(4)] $g\in S_{\hat{f}_A}(M)$ with linear part $\widetilde{O}$ (resp. $-\widetilde{O}$).
  \end{itemize}
  \end{corollary}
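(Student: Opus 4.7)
The strategy is a diagram chase built on top of Theorem \ref{thm:symmetry_Goursat_Introduction} and Corollary \ref{cor:Spacegroup}, exploiting the two special features of $J=jI_3$: it commutes with every $3\times 3$ matrix, and $\bar{J}=-J$. Commutativity gives the identity $\hat{f}_A = f_{AJ}=f_{JA}=(f_A)_J$, so the four surfaces $f$, $\hat{f}$, $f_A$, $\hat{f}_A$ sit at the corners of the commutative square displayed just before the corollary, with conjugation along one pair of sides and Goursat transformation by $A$ along the other. All four space groups can therefore be compared by traveling along either pair of edges.

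The plan is to establish the three equivalences $(1)\Leftrightarrow(3)$, $(1)\Leftrightarrow(2)$, and $(3)\Leftrightarrow(4)$. The first is a direct invocation of Theorem \ref{thm:symmetry_Goursat_Introduction}: parts (1) and (2) of that theorem turn the hypothesis $AO=\widetilde{O}A$ (resp.\ $AO=\widetilde{O}\bar{A}$) into exactly the compatibility needed between $g\in S_f(M)$ with linear part $O$ and $g\in S_{f_A}(M)$ with linear part $\widetilde{O}$. The second equivalence follows from Corollary \ref{cor:Spacegroup} applied to the pair $(f,\hat{f})$: part (1) handles the orientation preserving case and leaves the linear part unchanged, while part (3) (at $\theta=0$) handles the orientation reversing case and flips the sign of the linear part, producing $-O$. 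The third equivalence is the same Corollary \ref{cor:Spacegroup} applied to the pair $(f_A,\hat{f}_A)=(f_A,(f_A)_J)$: the orientation preserving case preserves $\widetilde{O}$ and the orientation reversing case replaces it by $-\widetilde{O}$.

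The only care required is to check that the sign conventions are internally consistent — that traversing the square in either order, $f\to\hat{f}\to\hat{f}_A$ or $f\to f_A\to\hat{f}_A$, yields the same linear part. In the orientation reversing case this reduces to a short verification: applying Theorem \ref{thm:symmetry_Goursat_Introduction}(2) directly with matrix $AJ$ demands the compatibility $(AJ)O=\widehat{O}\,\overline{AJ}=-\widehat{O}\bar{A}J$, and since $JO=OJ$ this collapses to $AO=-\widehat{O}\bar{A}$, forcing $\widehat{O}=-\widetilde{O}$, consistent with both routes. I do not expect any genuine obstacle here; the corollary is essentially a repackaging of Theorem \ref{thm:symmetry_Goursat_Introduction} and Corollary \ref{cor:Spacegroup}, with the main bookkeeping being the sign contributions of $\bar{J}=-J$ in the orientation reversing case.
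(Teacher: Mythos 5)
Your proposal is correct and follows essentially the same route as the paper, which derives the corollary directly from Theorem \ref{thm:symmetry_Goursat_Introduction} and Corollary \ref{cor:Spacegroup} together with the relations $\bar{J}=-J$ and $JA=AJ$. Your explicit sign check via $(AJ)O=\widehat{O}\,\overline{AJ}$ is exactly the bookkeeping the paper leaves implicit.
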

  
 
\subsection{Self duality relation}\label{sec:duality}
The next important Goursat transformation is the following self duality relation.  We call the Goursat transformation $f_D$ of a timelike minimal surface $f$ with the matrix
\begin{equation}
 D=\begin{pmatrix}\label{Dmatrix}
j & 0 & 0 \\
0 & 1 & 0  \\
0 & 0 & 1
\end{pmatrix}
\in \mathrm{CO}(1,2; \mathbb{C}')
\end{equation}
the {\it dual timelike minimal surface} of $f$.

The dual timelike minimal surface $f_D$ has the following notable property.

\begin{proposition}\label{prop: dual data}
Let $f$ be a timelike minimal surface with the Weierstrass data $(h,\eta)$. The Weierstrass data $(h_D, \eta_D)$ of the dual timelike minimal surface $f_D$ is 
\[
h_D=\cfrac{\left(1+j\right)h-\left(1-j\right)1/h}{2},\quad \eta_D=\cfrac{\left(1+j\right)1/h-\left(1-j\right)h}{2}\, h\eta.
\]
In particular, the duality reverses the signs of the Gaussian curvatures $K_f$ of the surface $f$ and $K_{f_D}$ of the dual $f_D$ as follows
\[
\mathrm{sgn}(K_f) = - \mathrm{sgn}(K_{f_D}).
\] 
\end{proposition}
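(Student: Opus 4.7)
The plan is to first identify the Weierstrass data of $f_D$ by expanding the transformed 1-form $D\omega$ in standard Weierstrass form, and then to compare $K_f$ and $K_{f_D}$ through an algebraic identity in the orthogonal idempotent basis of $\mathbb{C}'$. With $\omega = (-(1+h^2),\,j(1-h^2),\,2h)^T\eta$, one has $D\omega = (-j(1+h^2),\,j(1-h^2),\,2h)^T\eta$. Requiring this to coincide with $(-(1+h_D^2),\,j(1-h_D^2),\,2h_D)^T\eta_D$ produces three component equations; adding and subtracting the first two separates $\eta_D$ from $h_D^2\eta_D$, while the third yields $h_D\eta_D = h\eta$. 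Solving then gives
\[
\eta_D = \frac{(1+j)-(1-j)h^2}{2}\,\eta,\qquad h_D = \frac{2h}{(1+j)-(1-j)h^2},
\]
and the annihilation $(1+j)(1-j)=0$ rewrites $h_D$ as $[(1+j)h-(1-j)/h]/2$, matching the stated formula.

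For the curvature, $H=0$ gives $\mathrm{III}_f = K_f\,\mathrm{I}_f$, and the standard Weierstrass-type computation of $\mathrm{I}_f = -(1-|h|^2)^2|\hat\eta|^2\,dzd\bar z$ together with $\mathrm{III}_f = -4|h'|^2(1-|h|^2)^{-2}\,dzd\bar z$ for the Gauss map $\nu$ yields
\[
K_f = \frac{4|h'|^2}{(1-|h|^2)^4|\hat\eta|^2},\qquad \eta = \hat\eta\,dz.
\]
Since $(1-|h|^2)^4 \geq 0$ and $\mathrm{sgn}(a/b)=\mathrm{sgn}(ab)$ for nonzero real $a,b$, we obtain $\mathrm{sgn}\,K_f = \mathrm{sgn}(|h'|^2|\hat\eta|^2)$. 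Hence the sign reversal reduces to the stronger identity
\[
|h_D'|^2\cdot|\hat\eta_D|^2 = -\,|h'|^2\cdot|\hat\eta|^2.
\]

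To verify this, I would pass to the orthogonal idempotents $e_\pm = (1\pm j)/2$ of $\mathbb{C}'$, which satisfy $e_+ e_- = 0$, $e_\pm^2 = e_\pm$, and $\overline{e_\pm} = e_\mp$. Writing $h = a+jb$, the two key identities
\[
(1+j)h^2 + (1-j)\bar h^2 = 2(a+b)^2,\qquad (1+j)\bar h^2 + (1-j)h^2 = 2(a-b)^2
\]
fall out by collecting $e_\pm$-components. Differentiating $h_D = [(1+j)h^2 - (1-j)]/(2h)$ and using $(1+j)(1-j)=0$ in the expansion gives $|h_D'|^2 = |h'|^2(a+b)^2/|h|^4$, while an analogous direct expansion yields $|\hat\eta_D|^2 = -(a-b)^2|\hat\eta|^2$. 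Multiplying these and using $|h|^4 = (a+b)^2(a-b)^2$ produces exactly $-|h'|^2|\hat\eta|^2$, as required.

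The main obstacle is bookkeeping of signs under the indefinite paracomplex modulus: the crucial minus sign in $|\hat\eta_D|^2$, forced by $(1+j)(1-j)=0$, is the algebraic source of the sign reversal. Structurally this is very natural, since in the idempotent basis one has $D = I_3\,e_+ + \mathrm{diag}(-1,1,1)\,e_-$, so the duality acts trivially on one null generator of the McNertney decomposition \eqref{null curves decomposition} while applying a time-reflection to the other.
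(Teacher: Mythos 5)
Your argument is correct, and the first half is exactly the ``straightforward calculation'' the paper alludes to: expanding $D\omega$ and matching components does give $\eta_D=\tfrac{(1+j)-(1-j)h^2}{2}\eta$ and $h_D\eta_D=h\eta$, whence the stated formulas (one caveat: ``solving'' $h_D=h\eta/\eta_D$ divides by $(1+j)-(1-j)h^2$, which is a zero divisor wherever $\Re h=\Im h$; since you already have the three component equations, it is cleaner to verify the stated $(h_D,\eta_D)$ by direct substitution, which works without any division). For the curvature statement the paper takes a shorter route: writing $\mathrm{II}_f=\Re(\eta\,dh)$, it proves the single paraholomorphic identity $\eta_D\,dh_D=j\,\eta\,dh$, and then $K=-QR/\Lambda^2$ together with $QR=\tfrac14|\hat\eta h'|^2$ and $|jw|^2=-|w|^2$ gives the sign flip at once. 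Your key identity $|h_D'|^2|\hat\eta_D|^2=-|h'|^2|\hat\eta|^2$ is precisely the squared modulus of that relation, so you are reproving a (slightly weaker) consequence of it by a longer componentwise computation in the idempotent basis; the computation checks out, and your observation that $D=I_3e_++\mathrm{diag}(-1,1,1)e_-$ acts as a time reflection on one null generator of the McNertney decomposition is a nice structural explanation that the paper does not give. One small flag: your closed formula $K_f=4|h'|^2(1-|h|^2)^{-4}|\hat\eta|^{-2}$ seems to be off by an overall negative constant --- with $K=\det S$ and the paper's $K=-QR/\Lambda^2$ one finds $K_f=-|h'|^2(1-|h|^2)^{-4}|\hat\eta|^{-2}$, consistent with $\mathrm{III}=-K\,\mathrm{I}$ rather than $\mathrm{III}=K\,\mathrm{I}$ --- but this universal sign is the same for $f$ and $f_D$ and cancels in the comparison, so your conclusion $\mathrm{sgn}(K_{f_D})=-\mathrm{sgn}(K_f)$ is unaffected.
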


\begin{proof}
The former relations follow immediately from a straightforward calculation. Since the second fundamental form $\mathrm{II}_f$ of $f$ is written as 
 \[
 \mathrm{II}_f=\Re{(\eta dh)},
 \]
 we can check the latter property by showing the relation $\eta_Ddh_D =j\eta dh$. 
\end{proof}

\begin{remark}[Duality between minimal surfaces and maximal surfaces]\label{maximalduality}
Let us consider a minimal surface in $\mathbb{R}^3$ written as $f=\Re{\prescript{t\!}{}{\left(\omega_1, \omega_2, \omega_3 \right)}}$ with holomorphic one forms $\omega_j$ ($j=1,2,3$), and the transformation
\[
\begin{pmatrix}
 \tilde{\omega}_1 \\
\tilde{\omega}_2  \\
\tilde{\omega}_3
\end{pmatrix} 
=\widetilde{D}
\begin{pmatrix}
 \omega_1 \\
\omega_2  \\
\omega_3
\end{pmatrix},
\quad \widetilde{D}:= \begin{pmatrix}
i & 0 & 0 \\
0 & 1 & 0  \\
0 & 0 & 1
\end{pmatrix}
\]
where $i$ is the imaginary unit on the complex plane $\mathbb{C}$ satisfying $i^2=-1$. Although the matrix $\widetilde{D}$ belongs neither to the complex orthogonal group $\mathrm{O}(3; \mathbb{C})=\{ A\in \mathrm{M}(3,\mathbb{C}) \mid  {}^t\! AA=I_{3} \},$ nor to the indefinite complex orthogonal group $\mathrm{O}(1,2; \mathbb{C})= \{ A\in \mathrm{M}(3,\mathbb{C}) \mid  {}^t\! AI_{1,2}A=I_{1,2} \}$, the surface $f_{\tilde{D}}:=\Re{\prescript{t\!}{}{\left(\tilde{\omega}_1, \tilde{\omega}_2, \tilde{\omega}_3 \right)}}$ gives a maximal surface in $\mathbb{R}^3_1$. 
This one to one correspondence between minimal surfaces in $\mathbb{R}^3$ and maximal surfaces in $\mathbb{R}^3_1$ is called the {\it duality}, see \cite{Lee} and also \cite{AF,AL,LLS,UY} for example. 

We also remark that any minimal surface has non positive Gaussian curvature $K\leq 0$ and any maximal surface has non negative Gaussian curvature $K\geq 0$. Therefore, Proposition \ref{prop: dual data} means that the self duality between $f$ and $f_D$ is a Lorentizan version of the above duality between minimal and maximal surfaces.
\end{remark}

At the end of this subsection, we discuss symmetries derived from reflection principles, which are closely related to the conjugation $\hat{f}=f_J$ and the dual $f_D$. 
It is well known that if a minimal surface in Euclidean space has a straight line, then the surface has a symmetry with respect to the line, and its conjugate surface has a planar symmetry with respect to a plane orthogonal to the line. The same results also valid for maximal surfaces and timelike minimal surfaces in $\mathbb{R}^3_1$, see \cite{{ACM1}} and \cite{KKSY} for example. Since such a reflection symmetry is obtained by an orientation reversing isometry of the form $g(z)=\bar{z}$ for an appropriate coordinate $z$, the equivalence between (1) and (2) in Corollary \ref{cor:diagram} is a generalization of this fact. 
It can be also used to express the relation in the correspondence between the symmetries with respect to a shrinking singularity on a timelike minimal surface $f$ and a folding singularity on the conjugate surface $\hat{f}$.

To deal with singularities on timelike minimal surfaces, we recall the classes of generalized timelike minimal surfaces introduced in \cite{KKSY}.  

A non-constant smooth map $f\colon M \longrightarrow \mathbb{R}^3_1$ from a Lorentz surface $M$ into $\mathbb{R}^3_1$ is called a {\it generalized timelike minimal surface} if $f$ is immersed on an open dense set of $M$ and there exists a local coordinate system $(U; x, y)$ near each point of $M$ such that $\langle f_x, f_x \rangle = -\langle f_y, f_y \rangle$, $\langle f_x, f_y \rangle \equiv0$ and $f_{xx}-f_{yy}\equiv 0$ on $U$. 
For each local coordinate system $(U; x, y)$, let
\[
		\mathcal{A}=\{p\in U \mid \text{$f_x(p)$ or $f_y(p)$ is lightlike in $\mathbb{R}^3_1$} \},\quad \mathcal{B}=\{p\in U\mid d{f}_p=0\}.
\]
Since the induced metric $\mathrm{I}_f$ degenerates at each point $p$ in $\mathcal{A}\cup \mathcal{B}$, we call $p$ a {\it singular point} of $f$. A singular point $p \in \mathcal{A}$ is called a {\it shrinking singular point} (or a {\it conelike singular point}) if there is a regular curve $\gamma\colon I \to U$ from an interval $I$ passing through $p$ such that $\gamma(I)\subset \mathcal{A}$ and $f\circ \gamma (I)$ becomes a single point in $\mathbb{R}^3_1$, which we call a {\it shrinking singularity}.
Also a singular point $p \in \mathcal{A}$ is called a {\it folding singular point} (or a {\it fold singular point}) if there is a neighborhood of $p$ on which the surface is reparametrized as $p=(0,0)$ and $f_y(x,0)\equiv 0$. We call the image $\{f(x,0)\mid (u,0)\in U)\}$ a {\it folding singularity}. 
By using the singular Bj\"oling representation formula, reflection principles with respect to shrinking singular points and folding singular points have been proved in \cite[Lemma 4.3 and 4.5]{KKSY}.  

By considering the Goursat transformations $\hat{f}=f_J$ and $f_D$, symmetries about lines, planes, shrinking singularities, and folding singularities can be unified as follows (the same result holds for Riemannian case, see \cite{AF} for more details).
\begin{corollary}\label{cor:quadruple}
Let $f\colon M\to \mathbb{R}^3_1$ be a simply connected timelike minimal surface. Then the following statements are equivalent.
   \begin{itemize}
    \item[(1)] $f$ has the line symmetry with respect to a timelike straight line on the surface $f$ which is parallel to the $x_1$-axis,  
    \item[(2)] $\hat{f}=f_J$ has the planar symmetry with respect to a spacelike plane  parallel to the $x_2x_3$-plane which is perpendicular to the surface $\hat{f}$,
    \item[(3)] $f_D$  has the point symmetry with respect to a shrinking singularity, and
    \item[(4)] $\hat{f}_D=f_{DJ}=f_{JD}$ has the folded symmetry with respect to a folding singularity.
 \end{itemize}
 \begin{proof}
We give only an argument from the case where $f$ has the straight line of the form $f(x+j0)={}^t(x,0,0)$. By the regular reflection principle in \cite[Lemma 4.1]{KKSY}, we obtain
\[
f(\bar{z}) = Of(z),\quad \text{where $O=\text{diag}(1, -1, -1)$ and  $z=x+jy$}.
\]
By the relation $JO=-O\overline{J}$ and Corollary \ref{cor:diagram}, the conjugate surface $f_J$ has the symmetry $f_J(\bar{z})=-Of_J(z)$ up to a translation which proving the assertion (2).  Similarly, the relation $DO=-I_3\overline{D}$ induces the point symmetry $f_D(\bar{z})=-f_D(z)$ up to a translation. In particular,  $f_D(x+j0)$ shrinks to a single point in $\mathbb{R}^3_1$ proving the assertion (3). Finally, the relation $D(-O)=I_3\overline{D}$ induces the folded symmetry $f_{JD}(\bar{z})=f_{JD}(z)$ up to a translation proving the assertion (4).
 \end{proof}
\end{corollary}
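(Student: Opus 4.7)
The plan is to apply Corollary \ref{cor:diagram} three times, with $A=J$, $A=D$, and $A=DJ$, together with the regular and singular reflection principles of \cite{KKSY}, in order to trade the four conditions for four pairs $(\widetilde{O},g)$ whose geometric meanings are precisely (1)--(4). Because $J$, $D$, and $DJ$ all belong to $\mathrm{CO}(1,2;\mathbb{C}')$ and are (up to a scalar) involutions, each of $f$, $\hat f$, $f_D$, $\hat f_D$ arises as a Goursat transformation of every other, so the bidirectional nature of Corollary \ref{cor:diagram} automatically supplies the reverse implications once one direction is verified.

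First I would normalize: after an ambient isometry of $\mathbb{R}^3_1$, I may assume the line in (1) is the $x_1$-axis and pick a paracomplex coordinate $z=x+jy$ near the line so that $f(x+j0)={}^t(x,0,0)$. The regular reflection principle \cite[Lemma 4.1]{KKSY} then identifies $g(z)=\bar z$ as an orientation reversing element of $S_f(M)$ with linear part $O=\operatorname{diag}(1,-1,-1)$. The algebraic heart of the argument is the trio of identities
\begin{align*}
JO &= -O\,\bar{J}, \\
DO &= -I_3\,\bar{D}, \\
(DJ)\,O &= I_3\,\overline{DJ},
\end{align*}
which, by Corollary \ref{cor:diagram}, yield linear parts $\widetilde{O}=-O=\operatorname{diag}(-1,1,1)$, $-I_3$, and $I_3$ for the induced symmetries of $\hat f=f_J$, $f_D$, and $\hat f_D=f_{DJ}$, respectively.

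Finally I would translate each pair $(\widetilde{O},g)$ into the claimed geometric symmetry. For (2), $\widetilde{O}=\operatorname{diag}(-1,1,1)$ is reflection in the $x_2x_3$-plane; since the conjugation exchanges the $x$- and $y$-derivatives up to sign, on the fixed curve $\{y=0\}$ the derivative $\hat f_y$ points in the $x_1$-direction, so $\hat f$ meets the invariant plane orthogonally. For (3), the relation $f_D(\bar z)=-f_D(z)+t$ collapses $\{y=0\}$ to the single point $t/2$, producing a shrinking singularity with the asserted point symmetry. For (4), $f_{DJ}(\bar z)=f_{DJ}(z)+t$ restricted to $\{y=0\}$ forces $t=0$ and, upon $y$-differentiation, gives $(f_{DJ})_y\equiv 0$ there, which is the folding condition. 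The main obstacle is precisely this last translation step: one must match the degenerate reduced symmetries with $\widetilde{O}=\pm I_3$ against the singular reflection principles of \cite[Lemmas 4.3 and 4.5]{KKSY} in order to ensure that the reverse implications (3)$\Rightarrow$(1) and (4)$\Rightarrow$(1) are geometric rather than merely formal.
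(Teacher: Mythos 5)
Your proposal is correct and follows essentially the same route as the paper: normalize so the line is the $x_1$-axis, invoke the regular reflection principle of \cite{KKSY} to get $f(\bar z)=Of(z)$ with $O=\mathrm{diag}(1,-1,-1)$, and then push this symmetry through Corollary \ref{cor:diagram} via the matrix identities $JO=-O\bar J$, $DO=-\bar D$, and $(DJ)O=\overline{DJ}$ (the paper phrases the last one equivalently as $D(-O)=I_3\overline D$, starting from the conjugate surface). The extra detail you supply on translating the linear parts $\pm I_3$ into shrinking and folding singularities, and on using the singular reflection principles for the reverse implications, is consistent with what the paper leaves implicit.
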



\subsection{L\'opez-Ros deformation}
Another interesting example of Goursat transformations is the following L\'opez-Ros type deformation.
 We define the {\it L\'opez-Ros deformation} $\{f_\lambda \}_{\lambda>0}$ of a timelike minimal surface $f$ by changing Weierstrass data from $(h, \eta)$ to $(\lambda h, \eta / \lambda)$, that is,
 \begin{equation}\label{eq:Gourast}
 f_\lambda(p)  
  =\mathrm{Re}\int^p_{p_0}\prescript{t\!}{}{\left( -\left(\frac{1}{\lambda}+\lambda h^2\right), j\left(\frac{1}{\lambda}-\lambda h^2\right), 2h \right)}\eta
    +f_\lambda(p_0). 
 \end{equation}
 This deformation was introduced in \cite{LR} for minimal surfaces in $\mathbb{R}^3$, and the deformation $\{f_\lambda \}_{\lambda>0}$ preserves the second fundamental form as the original L\'opez-Ros deformations since the second fundamental form $\mathrm{II}_f$ of $f=f_1$ is written as 
 \[
 \mathrm{II}_f=\Re{(\eta dh)}.
 \]
 
 Moreover, a straightforward calculation shows that the deformation \eqref{eq:Gourast} is obtained by the Goursat transformation of $f=f_1$ with respect to the matrix
\begin{equation}\label{LRmatrix}
A=A(\lambda) = \begin{pmatrix}
\frac{\lambda+\frac{1}{\lambda}}{2} & j\frac{\lambda-\frac{1}{\lambda}}{2} & 0 \\
j\frac{\lambda-\frac{1}{\lambda}}{2} & \frac{\lambda+\frac{1}{\lambda}}{2} & 0 \\
0 & 0 & 1
\end{pmatrix}
\in \mathrm{CO}(1,2; \mathbb{C}').
\end{equation}
Whereas the isometric deformation $\{f_\lambda \}_\lambda$ preserving the first fundamental form $\mathrm{I}_f$ and the anti isometric deformation $\{\hat{f}_\lambda\}_\lambda$ preserving $-\mathrm{I}_f$ have preserved a kind of symmetries, as an application of Theorem \ref{thm:symmetry_Goursat_Introduction}, we can control symmetries of timelike minimal surfaces while keeping the second fundamental form $\mathrm{II}_f$ via the L\'opez-Ros deformation.

\begin{remark}[Ambient isometry as a Goursat transformation]
We should remark that ambient isometries in $\mathbb{R}^3_1$ and Goursat transformations \eqref{eq: Goursat} do not commute in general, and this noncommutativity produces different surfaces which are not isometric to the original surface.  
In the case of the duality of minimal and maximal surfaces, such noncommutativity was discussed by Ara\'ujo and Leite \cite{AL} (see also Remark \ref{maximalduality}). 

In the case of timelike minimal surfaces, up to a translation, we can see an ambient isometry in $\mathbb{R}^3_1$ as a Goursat transformation with a matrix in $\mathrm{O}(1,2)\subset \mathrm{CO}(1,2; \mathbb{C})$. Thus, we can also handle by Theorem \ref{thm:symmetry_Goursat_Introduction} the symmetries of different surfaces that arise from the noncommutativity of these transformations.
\end{remark}

\section{Examples}\label{sec:Ex}
In this section, we give concrete examples of symmetry relations discussed in the previous section.
Let us first see how Corollary \ref{cor:diagram} gives many symmetry relations between $f, f_D, \hat{f}$ and $\hat{f}_D$.

\begin{example}[Lorentzian Enneper surface, parabolic helicoid and their conjugates]\label{example:Enneper}
Let us take the Weierstrass data $(h,\eta)=(z, dz)$ defined on $\mathbb{C}'$. The surface written by \eqref{eq:pW} is
\begin{align*}
f(z=x+jy) &=\mathrm{Re}\prescript{t\!}{}{\left( -z-\frac{z^3}{3}, j\left(z-\frac{z^3}{3}\right), z^2 \right)}\\
&=\prescript{t\!}{}{\left(-x-\frac{x^3}{3}-xy^2, y-x^2y-\frac{y^3}{3},x^2+y^2 \right)}
\end{align*}
and it is called the {\it Lorentzian Enneper surface} (see \cite{Konderak} for example). By using the matrix $D$ in \eqref{Dmatrix}, its dual $f_D$ is written as follows.
\begin{align*}
f_D (z) &= \mathrm{Re}\left(D \prescript{t\!}{}{\left( -z-\frac{z^3}{3}, j\left(z-\frac{z^3}{3}\right), z^2 \right)}\right)\\
&=\prescript{t\!}{}{\left(-y-x^2y-\frac{y^3}{3}, y-x^2y-\frac{y^3}{3},x^2+y^2 \right)}.
\end{align*}
The surface $f_D$ and its conjugate $\hat{f}_D$ are nothing but the surfaces called the {\it timelike parabolic helicoid} and the {\it timelike parabolic catenoid}, respectively (see \cite{KKSY} for example). Let us see how the symmetries of $f, f_D, \hat{f}$ and $\hat{f}_D$ relate to each other. 

First, the surface $f$ has the following orientation reversing planar symmetries: 
\begin{align*}
f(\bar{z}) = \begin{pmatrix}
1 & 0 & 0 \\
0 & -1 & 0  \\
0 & 0 & 1
\end{pmatrix}f(z),\quad 
f(-\bar{z}) = \begin{pmatrix}
-1 & 0 & 0 \\
0 & 1 & 0  \\
0 & 0 & 1
\end{pmatrix}f(z).
\end{align*}

By Theorem \ref{thm:symmetry_Goursat_Introduction} and the matrix relations
\begin{align*}
D\begin{pmatrix}
1 & 0 & 0 \\
0 & -1 & 0  \\
0 & 0 & 1
\end{pmatrix}
=
\begin{pmatrix}
-1 & 0 & 0 \\
0 & -1 & 0  \\
0 & 0 & 1
\end{pmatrix}
\overline{D}, \qquad 
D\begin{pmatrix}
-1 & 0 & 0 \\
0 & 1 & 0  \\
0 & 0 & 1
\end{pmatrix}
=
\begin{pmatrix}
1 & 0 & 0 \\
0 & 1 & 0  \\
0 & 0 & 1
\end{pmatrix}\overline{D},
\end{align*}
the dual timelike minimal surface $f_D$ has the following symmetries:
\begin{align*}
f_D(\bar{z}) = \begin{pmatrix}
-1 & 0 & 0 \\
0 & -1 & 0  \\
0 & 0 & 1
\end{pmatrix}f_D(z),\quad 
f_D(-\bar{z}) = \begin{pmatrix}
1 & 0 & 0 \\
0 & 1 & 0  \\
0 & 0 & 1
\end{pmatrix}f_D(z)
\end{align*}
which mean that $f_D$ has the line symmetry with respect to the spacelike $x_3$-axis and the folded symmetry with respect to fold singularities along $\Im{z}=0$, see the top right of Figure \ref{Fig:Enneper}.

Furthermore, by Corollary \ref{cor:diagram}, the conjugate surface $\hat{f}$ has the following line symmetries with respect to the spacelike $x_2$-axis and the timelike $x_1$-axis:
\begin{align*}
\hat{f}(\bar{z}) = \begin{pmatrix}
-1 & 0 & 0 \\
0 & 1 & 0  \\
0 & 0 & -1
\end{pmatrix}\hat{f}(z),\quad 
\hat{f}(-\bar{z}) = \begin{pmatrix}
1 & 0 & 0 \\
0 & -1 & 0  \\
0 & 0 & -1
\end{pmatrix}\hat{f}(z).
\end{align*}

Finally, Corollary \ref{cor:diagram} shows that $\hat{f}_D$ has the following symmetries:
\begin{align*}
\hat{f}_D(\bar{z}) = \begin{pmatrix}
1 & 0 & 0 \\
0 & 1 & 0  \\
0 & 0 & -1
\end{pmatrix}\hat{f}_D(z),\quad 
\hat{f}_D(-\bar{z}) = \begin{pmatrix}
-1 & 0 & 0 \\
0 & -1 & 0  \\
0 & 0 & -1
\end{pmatrix}\hat{f}_D(z).
\end{align*}
which mean that $\hat{f}_D$ has the planar symmetry with respect to the timelike $x_1x_2$-plane and the point symmetry with respect to shrinking singularities along $\Im{z}=0$, see the bottom right of Figure \ref{Fig:Enneper}.

For orientation preserving isometries, we can also check that by Corollary \ref{cor:diagram} $f, f_D, \hat{f}$ and $\hat{f}_D$ share the line symmetry with respect to the spacelike $x_3$-axis:

\begin{align*}
f(-z) = \begin{pmatrix}
-1 & 0 & 0 \\
0 & -1 & 0  \\
0 & 0 & 1
\end{pmatrix}f(z),\quad 
f_D(-z) = \begin{pmatrix}
-1 & 0 & 0 \\
0 & -1 & 0  \\
0 & 0 & 1
\end{pmatrix}f_D(z).\\
\hat{f}(-z) = \begin{pmatrix}
-1 & 0 & 0 \\
0 & -1 & 0  \\
0 & 0 & 1
\end{pmatrix}\hat{f}(z),\quad 
\hat{f}_D(-z) = \begin{pmatrix}
-1 & 0 & 0 \\
0 & -1 & 0  \\
0 & 0 & 1
\end{pmatrix}\hat{f}_D(z).
\end{align*}
\end{example}

\begin{figure}[!h]
\hspace{+3.5cm}
\vspace{-1.5cm}
\begin{center}
\includegraphics[clip,scale=0.45,bb=0 0 600 600]{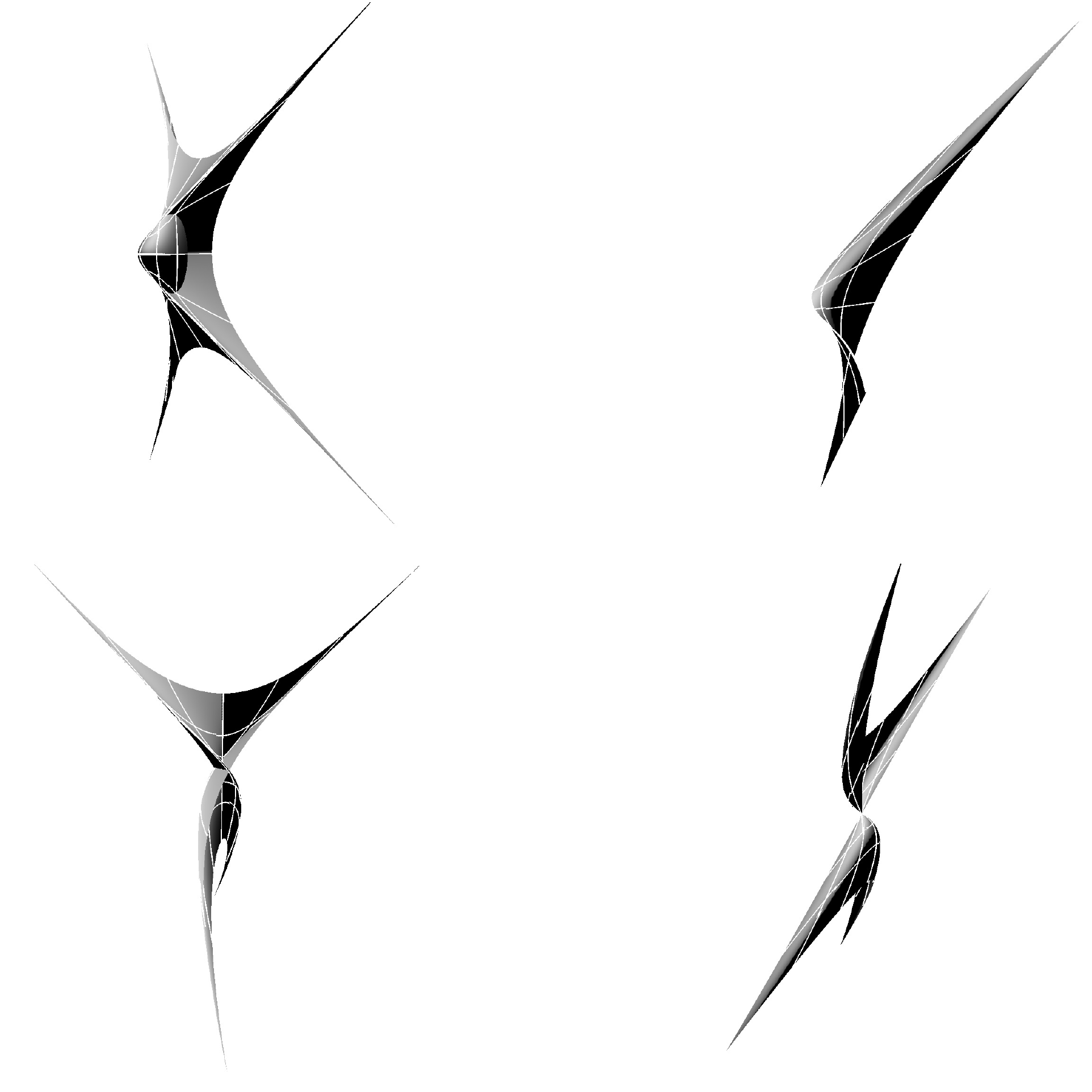}
\end{center}
\caption{The Lorentzian Enneper surface $f$ (left top), the parabolic helicoid $f_D$ (right top), the conjugate Enneper surface $\hat{f}$ (left bottom) and the parabolic catenoid $\hat{f}_D$ (right bottom).}\label{Fig:Enneper}
\end{figure}

\begin{example}[Periodic Bonnet type surfaces]\label{example:Bonnet}
Finally, we see symmetry relations for L\'opez-Ros deformation. 
Let us take the Weierstrass data $(h,\eta)=(\ptan{z}, \frac{1}{2}\pcos^2{z}dz)$ defined on $\mathbb{C}'$. The surface written by \eqref{eq:pW} is
\begin{align*}
f(z=x+jy) &=\mathrm{Re}\prescript{t\!}{}{\left( -\frac{z}{2}, \frac{j}{4}\psin{2z}, -\frac{1}{4}\pcos{2z} \right)}\\
&=\prescript{t\!}{}{\left(-\frac{x}{2}, \frac{1}{4}\cos{2x}\sin{2y}, -\frac{1}{4}\cos{2x}\cos{2y} \right)}
\end{align*}
and it is called the {\it elliptic catenoid}, which is a rotational timelike minimal surface. By using the matrix $A(\lambda)$ in \eqref{LRmatrix}, its L\'opez-Ros deformation $f_\lambda=f_{A(\lambda)}$ is as follows
\begin{align*}
f_\lambda (z) &= \mathrm{Re}\left(A(\lambda) \prescript{t\!}{}{\left( -\frac{z}{2}, \frac{j}{4}\psin{2z}, \frac{1}{4}\pcos{2z} \right)} \right). 
\end{align*}
The Weierstrass data of $f_\lambda$ is $(\lambda h,\eta/\lambda)=(\lambda \ptan{z}, \frac{1}{2\lambda}\pcos^2{z}dz)$, and hence the surface $f_\lambda$ is exactly the surface known as a {\it timelike minimal Bonnet type surface}, on which each curvature line lies on a plane for any $\lambda>0$, see Figure \ref{Fig:Bonnet}. For more details of such surfaces, see \cite{ACO}. 
Let us see how the symmetry of $f$ is preserved or changed via the L\'opez-Ros deformation $\{f_\lambda\}_{\lambda>0}$.

The surface $f$ has the following orientation preserving symmetries: 
\begin{align*}
f(z+\pi) = f(z) +\prescript{t\!}{}{\left( -\frac{\pi}{2},0, 0 \right)},\quad
f(z+j\pi) = f(z),\quad
f(-z) = \begin{pmatrix}
-1 & 0 & 0 \\
0 & -1 & 0  \\
0 & 0 & 1
\end{pmatrix}f(z).
\end{align*}
Obviously, the linear parts of these symmetries commute with the matrix $A(\lambda)$ in \eqref{LRmatrix} for each $\lambda$. Hence, Theorem \ref{thm:symmetry_Goursat_Introduction} implies that the above symmetries are propagated to the deformed surface $f_\lambda$. In particular, the linear parts of the above symmetries are preserved as follows.
\begin{align*}
&f_\lambda(z+\pi) = f_\lambda(z) + \prescript{t\!}{}{\left( -\frac{\pi}{4}\left(\lambda+\frac{1}{\lambda}\right),0, 0 \right)},\quad \\
f_\lambda(z+j\pi) &= f_\lambda(z)+\prescript{t\!}{}{\left(0, -\frac{\pi}{4}\left(\lambda-\frac{1}{\lambda}\right), 0 \right)},\quad 
f_\lambda(-z) = \begin{pmatrix}
-1 & 0 & 0 \\
0 & -1 & 0  \\
0 & 0 & 1
\end{pmatrix}f_\lambda(z).
\end{align*}
These relations mean that the surface $f_\lambda$ is doubly periodic for $\lambda \neq 1$ and the elliptic catenoid $f=f_1$ is singly periodic, and $f_\lambda$ for any $\lambda>0$ has the line symmetry with respect to $x_3$-axis.

Also, $f$ has the following orientation reversing symmetries:
\[
f(\bar{z}) = \begin{pmatrix}
1 & 0 & 0 \\
0 & -1 & 0  \\
0 & 0 & 1
\end{pmatrix}f(z),\quad 
f(-\bar{z}) = \begin{pmatrix}
-1 & 0 & 0 \\
0 & 1 & 0  \\
0 & 0 & 1
\end{pmatrix}f(z).
\]
If $O$ be any of the above matrices, we can check that $O$ satisfies $AO=O\bar{A}$ for the matrix $A=A(\lambda)$ in \eqref{LRmatrix} for each $\lambda$. Hence, Theorem \ref{thm:symmetry_Goursat_Introduction} implies that the above symmetries are propagated to the deformed surface $f_\lambda$ as follows.
\[
f_\lambda(\bar{z}) = \begin{pmatrix}
1 & 0 & 0 \\
0 & -1 & 0  \\
0 & 0 & 1
\end{pmatrix}f_\lambda(z),\quad 
f_\lambda(-\bar{z}) = \begin{pmatrix}
-1 & 0 & 0 \\
0 & 1 & 0  \\
0 & 0 & 1
\end{pmatrix}f_\lambda(z),
\]
which mean that  the planar symmetries with respect to the timelike $x_1x_3$-plane and the spacelike $x_2x_3$-plane on the surface $f_\lambda$ are preserved for any $\lambda>0$.

\vspace{-1.3cm}
\begin{figure}[!h]
\begin{center}
\begin{tabular}{c}
\hspace{+1.5cm}
\begin{minipage}{0.5\hsize}
\begin{center}
\vspace{-1.8cm}
\includegraphics[clip,scale=0.45,bb=0 0 500 509]{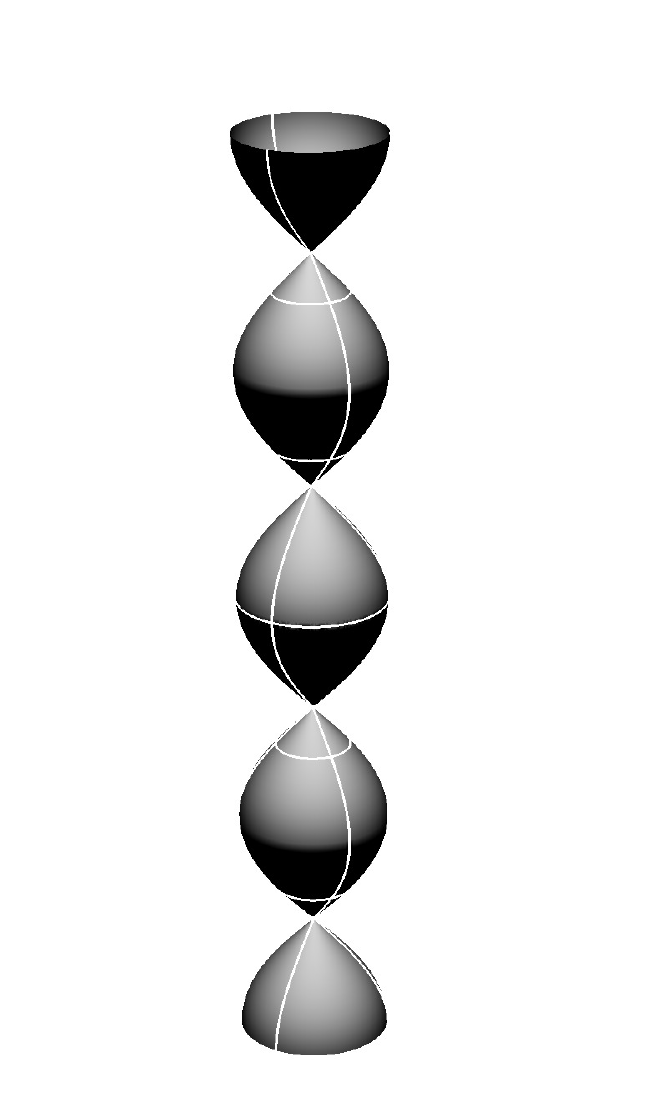}
\vspace{0.5cm}
\end{center}
\end{minipage}
\hspace{-4.0cm}
\begin{minipage}{0.5\hsize}
\begin{center}
\vspace{-1.1cm}
\includegraphics[clip,scale=0.45,bb=0 0 555 449]{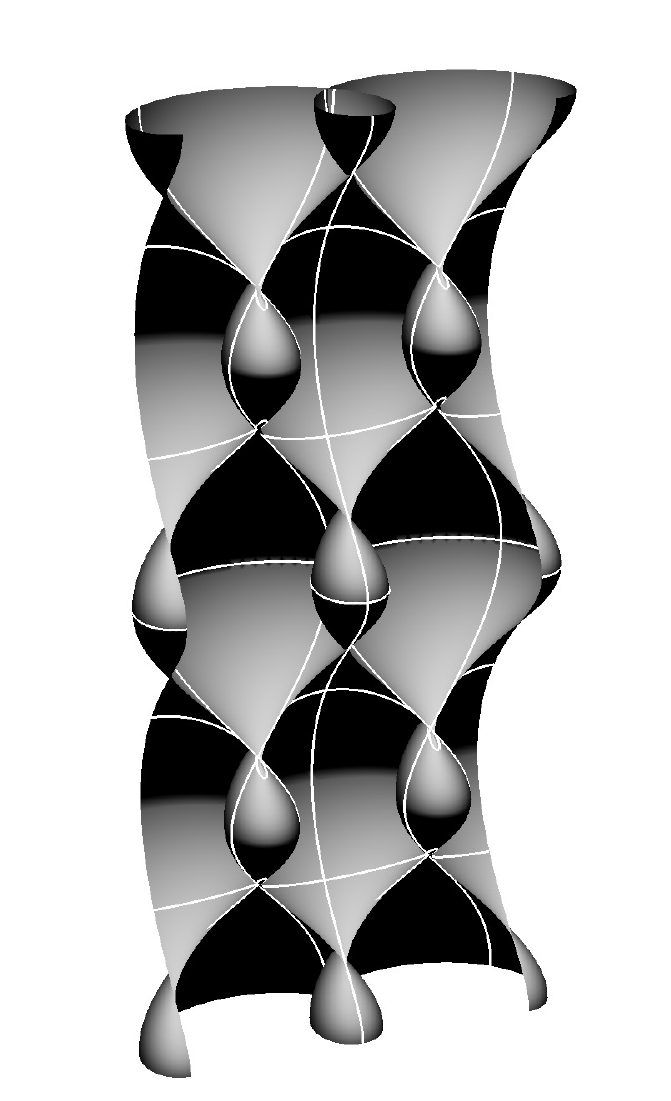}
\vspace{0.1cm}
\end{center}
\end{minipage}

\hspace{-3.5cm}
\begin{minipage}{0.5\hsize}
\begin{center}
\vspace{-1.1cm}
\includegraphics[clip,scale=0.45,bb=0 0 555 449]{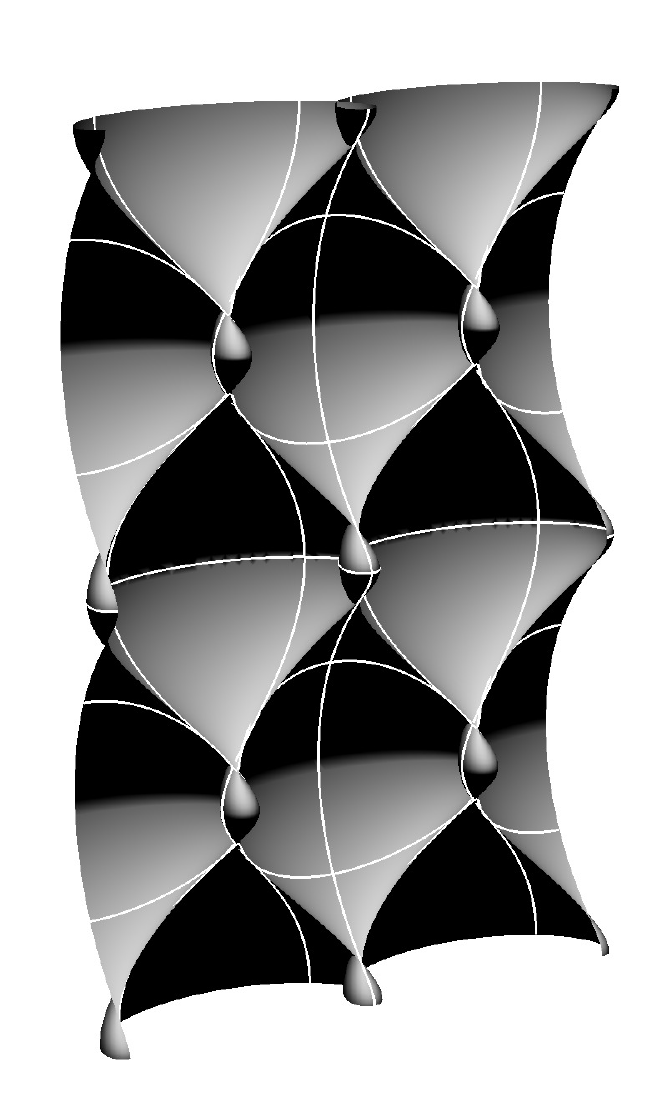}
\vspace{0.1cm}
\end{center}
\end{minipage}

\end{tabular}
\end{center}
\vspace{-0.8cm}
\caption{The singly periodic elliptic catenoid $f_1$ (left), the doubly periodic Bonnet type surfaces $f_{1.5}$ (center) and $f_{2}$ (right). In addition to translation symmetries, these surfaces share the planar symmetries and the line symmetry discussed in Example \ref{example:Bonnet}.}\label{Fig:Bonnet}

\end{figure}
\end{example}



 \end{document}